\newtheorem{theorem}{Theorem}
\newtheorem{lemma}{Lemma}
\newtheorem{assumption}{Assumption}
\newtheorem{definition}{Definition}
\newtheorem{remark}{Remark}
\newcommand{\bx}{{\mathbf{x}}}
\newcommand{\bxi}{{\bm{\xi}}}
\newcommand{\bX}{{\mathit{X}}}
\newcommand{\bY}{{\mathit{Y}}}
\newcommand{\bA}{{\mathit{A}}}
\newcommand{\by}{{\mathbf{y}}}
\newcommand{\bxs}{{\mathbf{x^*}}}
\newcommand{\bz}{{\mathbf{z}}}
\newcommand{\cN}{{\mathcal N}}
\newcommand{\cG}{{\mathcal G}}
\newcommand{\cV}{{\mathcal V}}
\newcommand{\cE}{{\mathcal E}}
\newcommand{\bR}{{\mathbb R}}
\newcommand{\bE}{{\mathbb E}}
\DeclareMathOperator*{\argmin}{argmin}
\title{\LARGE \bf
On Linear Convergence of Distributed Stochastic Bilevel Optimization over Undirected Networks via Gradient Aggregation
}
\author{Ajay Tak$^{1}$ and Mayank Baranwal$^{2}$
\thanks{$^{1}$A.~Tak is with the Institute for Robotics and Intelligent Machines, Georgia Institute of Technology, Atlanta, GA 30332, USA
{\tt\small ajytak@gatech.edu}. $^{2}$M.~Baranwal is with the Faculty of Centre for Systems and Control, Indian Institute of Technology, Bombay, 400076, India
{\tt\small mbaranwal@iitb.ac.in}.}%
}
\begin{document}

\maketitle
\thispagestyle{empty}
\pagestyle{empty}

\begin{abstract}
Many large-scale constrained optimization problems can be formulated as bilevel distributed optimization tasks over undirected networks, where agents collaborate to minimize a global cost function while adhering to constraints, relying only on local communication and computation. In this work, we propose a distributed stochastic gradient aggregation scheme and establish its \emph{linear convergence} under the weak assumption of global strong convexity, which relaxes the common requirement of local function convexity on the objective and constraint functions. Specifically, we prove that the algorithm converges at a linear rate when the global objective function (and not each local objective function) satisfies strong-convexity. Our results significantly extend existing theoretical guarantees for distributed bilevel optimization. Additionally, we demonstrate the effectiveness of our approach through numerical experiments on distributed sensor network problems and distributed linear regression with rank-deficient data.
\end{abstract}

\section{INTRODUCTION AND RELATED WORK}\label{sec:Intro}

We study a class of bilevel optimization problem of the form~\cite{yousefian2021bilevel}:
\begin{equation}\label{eq:bilevl-cent}
    \min\limits_{\bx\in\bR^d} \sum\limits_{i=1}^nf_i(\bx) \quad \text{such that } \quad \bx\in\argmin\limits_{\bx\in\bR^d} \sum\limits_{i=1}^ng_i(\bx),
\end{equation}
where the objective functions $f_i, g_i:\bR^d\to\bR$ are assumed to be private and distributed across a network of $n$ agents. However, we assume that there is no central coordination between the agents, and since each agent has access only to its local functions $\{f_i(\cdot), g_i(\cdot)\}$, the nodes must work together to collaboratively solve the problem. 

Bilevel distributed optimization problems generalize several existing formulations in the distributed optimization literature. For example, consider the standard distributed optimization problem in the absence of strong convexity. By defining $f_i(\bx) \coloneqq \|\bx\|^2/n$, the bilevel optimization problem~\eqref{eq:bilevl-cent} reduces to finding the minimum-energy solution, i.e., the one with the smallest $\ell_2$-norm—of the classical distributed optimization problem, $\min\limits_{\bx\in\bR^d} \sum_{i=1}^{n} g_i(\bx)$. Similarly, consider the following linearly constrained distributed optimization problem:
\[
     \min\limits_{\bx\in\bR^d} \sum\limits_{i=1}^nf_i(\bx) \quad \text{s.t. } \quad \begin{array}{cl}
         A_i\bx = \mathbf{b}_i & \text{for all $i\in[n]$} \\
         \bx_j\geq 0 & \text{for all $j\in\mathcal{J}\subseteq[d]$}
     \end{array}.
\]
If the above problem is feasible, we can define constraint objective function as:
\[
    g_i(\bx) \coloneqq \frac{1}{2}\|A_i\bx-\mathbf{b}_i\|^2 + \frac{1}{2n}\sum\nolimits_{\mathcal{J}}\max\{0,-\bx_j\}^2,
\]
making it amenable to the bilevel optimization framework in~\eqref{eq:bilevl-cent}. As a result, a broad class of distributed optimization problems can be expressed within the framework of~\eqref{eq:bilevl-cent}, encompassing applications such as sensor networks, satellite tracking~\cite{hu2016smooth}, and large-scale machine learning~\cite{nathan2017optimization}, among others. The classical consensus-based approaches were initially formulated in the 1970s and 1980s~\cite{Tsitsiklis1984} with early work assuming shared global objectives distributed across networked agents. Over time, research has expanded to accommodate heterogeneous local objectives, communication constraints, and nonconvex settings~\cite{Nedic2009}.  

In the past decade, accelerated and gradient-tracking methods have emerged as key innovations for improving convergence rates in distributed settings. Algorithms such as EXTRA~\cite{Shi2015}, gradient-tracking-based approaches~\cite{pu2021distributed}, and dual-decomposition techniques~\cite{Boyd2011} have provided rigorous guarantees for convex and strongly convex problems. However, challenges remain when addressing time-varying networks and nonconvex landscapes. Recent works such as decentralized momentum methods~\cite{Scutari2020}, push-pull protocols for directed graphs~\cite{pu2020push}, and adaptive gradient schemes~\cite{han2024distributed} have demonstrated improved scalability and robustness. Most recently, \cite{yousefian2021bilevel} proposed stochastic gradient aggregation for bilevel distributed optimization over directed networks under assumptions of strong convexity on local objective functions.

Despite these advancements, existing approaches often assume strong convexity of individual objective functions or require restrictive communication topologies. Our work extends this theory by considering a bilevel distributed setting, leveraging regularization techniques to weaken convexity assumptions while preserving linear convergence rates. By incorporating the Polyak-Łojasiewicz (PL) condition, we broaden the applicability of distributed methods to a wider range of practical problems.

\noindent \textbf{Contributions}: We propose the Bilevel Distributed Aggregated Stochastic Gradient (BDASG) algorithm for solving~\eqref{eq:bilevl-cent}. We provide a rigorous theoretical analysis and establish that, under suitable assumptions the algorithm achieves linear convergence to the optimal solution. In particular, we show that the BDASG algorithm exhibits linear convergence when the global objective function, i.e., \(\sum\nolimits_{i=1}^n f_i(\bx)\), is strongly convex, without requiring individual objective functions to be convex. Moreover, we conjecture that the algorithm can be extended to guarantee linear convergence even when the global objective function satisfies the PL inequality, the weakest known condition for ensuring linear convergence. However, apart from Lemma~\ref{lem:opt}, which assumes strong convexity of the global objective function, the rest of our theoretical analysis relies only on the PL condition to establish linear convergence. To the best of our knowledge, our work provides the strongest convergence guarantees under the minimal set of assumptions. While our contributions are primarily theoretical, we validate the effectiveness of our approach through numerical experiments.

\noindent \textbf{Mathematical Notations}: We denote the set of real numbers by $\bR$. Vectors are represented using bold lowercase letters, while matrices are denoted by uppercase italicized symbols. For a function $f:\bR^d\to\bR$, the gradient at a point $\bx\in\bR^d$ is written as $\nabla f(\bx)$. Unless stated otherwise, $\|\bx\|$ refers to the Euclidean (2-norm) of $\bx$. We represent an undirected graph as $\cG \coloneqq (\cV, \cE)$ with adjacency matrix $\bA = [a_{ij}] \in \bR^{n \times n}$, where $a_{ij} \in [0,1]$. The set of nodes is given by $\cV = \{1,2,\dots,n\}$, and the 1-hop neighborhood of a node $i \in \cV$ is denoted by $\cN_i$, i.e., $\cN_i = \{j \in \cV \mid a_{ij} > 0\}$.\\ An agent $i$'s information vector is denoted by $\bx_i$, while the average information of all the agents is represented as: $\bar{\bx}\coloneqq \frac{1}{n}\sum\nolimits_{i=1}^n\bx_i$. The aggregated state is represented by $\bX\coloneqq [\bx_1 \ \bx_2 \ \dots \ \bx_n]^\intercal$ with $\bX^\dagger$ depicting the aggregated consensus error given as: $\bX^\dagger = \left(I-\frac{1}{n}\mathds{1}\mathds{1}^\intercal\right)X$, where $\mathds{1}$ is a vector of all $1$'s and $\bx_i^\dagger\coloneqq \bx_i-\bar{\bx}$ is the consensus error at the $i^{\text{th}}$-node.

\section{PRELIMINARIES AND PROBLEM FORMULATION}\label{sec:prelim}
In this section, we present preliminary results, including definitions, lemmas, and assumptions, that serve as foundational elements for the convergence analysis of the proposed algorithmic approach.
\begin{definition}[PL-inequality]\label{def:PL}
    A continuously differentiable function $q:\bR^d\to\bR$ is said to satisfy the Polyak-Łojasiewicz (PL) inequality with constant $\mu>0$ if, for all $\bx\in\bR^d$,
    \[
        \|\nabla q(\bx)\|^2 \geq 2\mu(q(\bx)-q^*),
    \]
    where $q^*$ is the minimum value of $q(\cdot)$. In addition, every strongly convex function $q(\cdot)$ with convexity modulus $\mu>0$ satisfies the PL-inequality.
\end{definition}
\begin{definition}[$L$-smooth]\label{def:L-smooth}
    A differentiable function $q:\bR^d\to\bR$ is said to be $L$-smooth if its gradient is Lipschitz continuous with constant $L$, i.e., for all $\bx,\by\in\bR^d$, the following inequality holds:
    \[
        \|\nabla q(\bx)-\nabla q(\by)\|\leq L\|\bx-\by\|.
    \]
\end{definition}
\begin{lemma}[Norm equivalence]\label{lem:norm-equiv}
    For any $\bx\in\bR^d$, the following holds: $\|\bx\|_2\leq\|\bx\|_1\leq\sqrt{d}\|\bx\|_2$.
\end{lemma}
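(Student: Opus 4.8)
The plan is to treat the two inequalities separately, since both are standard consequences of elementary relationships between the coordinates of $\bx$. Write $\bx = (x_1, \dots, x_d)^\intercal$, so that $\|\bx\|_2^2 = \sum_{i=1}^d x_i^2$ and $\|\bx\|_1 = \sum_{i=1}^d |x_i|$. Each bound can then be established by a direct coordinate-wise argument.

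For the lower bound $\|\bx\|_2 \leq \|\bx\|_1$, I would compare the squares. Expanding $\|\bx\|_1^2 = \left(\sum_{i=1}^d |x_i|\right)^2 = \sum_{i=1}^d x_i^2 + \sum_{i \neq j} |x_i||x_j|$, I observe that this equals $\|\bx\|_2^2$ plus the cross terms $\sum_{i \neq j} |x_i||x_j|$, all of which are nonnegative. Hence $\|\bx\|_1^2 \geq \|\bx\|_2^2$, and taking square roots (both norms being nonnegative) yields $\|\bx\|_2 \leq \|\bx\|_1$.

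For the upper bound $\|\bx\|_1 \leq \sqrt{d}\,\|\bx\|_2$, I would invoke the Cauchy–Schwarz inequality applied to the vectors $(|x_1|, \dots, |x_d|)$ and the all-ones vector $\mathds{1} \in \bR^d$. This gives $\|\bx\|_1 = \sum_{i=1}^d 1 \cdot |x_i| \leq \left(\sum_{i=1}^d 1^2\right)^{1/2}\left(\sum_{i=1}^d x_i^2\right)^{1/2} = \sqrt{d}\,\|\bx\|_2$, which is exactly the claimed bound.

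There is no genuine obstacle here; the result is a textbook fact, and the only point requiring minor care is setting up the Cauchy–Schwarz step with the correct pairing against $\mathds{1}$ so that the factor $\sqrt{d}$ emerges cleanly. Both inequalities are tight: the lower bound is attained when $\bx$ is supported on a single coordinate, and the upper bound is attained when all coordinates of $\bx$ have equal magnitude, which confirms that the constants $1$ and $\sqrt{d}$ cannot be improved.
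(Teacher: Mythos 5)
Your proof is correct: the expansion of $\|\bx\|_1^2$ with nonnegative cross terms gives the lower bound, and Cauchy--Schwarz against the all-ones vector gives the upper bound, with the tightness remarks accurate. The paper states this lemma as a standard fact without providing any proof, so there is no argument of the authors' to compare against; your write-up simply supplies the routine justification they omitted.
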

\begin{lemma}[Co-coercivity~\cite{Vu2017ConvexOptimization}]\label{lem:coer}
   Let $h:\bR^d\to\bR$ be an $L$-smooth function. Then for all $\bx,\by\in\bR^d$, the following inequality holds:
   \[
        \langle\by-\bx,\nabla h(\by)-\nabla h(\bx)\rangle \geq \frac{1}{L}\|\nabla h(\by)-\nabla h(\bx)\|^2.
   \]
\end{lemma}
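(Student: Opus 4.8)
The plan is to derive the inequality from the standard gradient-dominated descent bound for smooth convex functions, applied to a suitably shifted version of $h$. The core auxiliary fact is the following: if $\phi:\bR^d\to\bR$ is $L$-smooth and convex with a minimizer, then $\phi(\bx)-\phi^\star\geq \frac{1}{2L}\norm{\nabla\phi(\bx)}^2$ for every $\bx$. This is the step where convexity genuinely enters — co-coercivity is false for merely $L$-smooth functions (e.g. a concave quadratic) — so I treat the convexity of $h$ as implicit in the hypothesis, as is standard for the Baillon--Haddad statement cited in~\cite{Vu2017ConvexOptimization}.

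First I would establish this auxiliary bound. Starting from the descent lemma, which follows from $L$-smoothness via $\phi(\by)=\phi(\bx)+\int_0^1\langle\nabla\phi(\bx+t(\by-\bx)),\by-\bx\rangle\,dt$ together with the Lipschitz bound on $\nabla\phi$, I obtain the quadratic upper model $\phi(\by)\leq \phi(\bx)+\langle\nabla\phi(\bx),\by-\bx\rangle+\frac{L}{2}\norm{\by-\bx}^2$. Minimizing the right-hand side over $\by$ by taking the gradient step $\by=\bx-\frac{1}{L}\nabla\phi(\bx)$ yields $\phi^\star\leq\phi(\bx)-\frac{1}{2L}\norm{\nabla\phi(\bx)}^2$, which is exactly the claimed auxiliary bound.

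Next I would apply this to the shifted function $\phi_{\bx}(\bz)\coloneqq h(\bz)-\langle\nabla h(\bx),\bz\rangle$. Since $\nabla\phi_{\bx}(\bz)=\nabla h(\bz)-\nabla h(\bx)$ vanishes at $\bz=\bx$, the point $\bx$ is a stationary point and, by convexity, a global minimizer of $\phi_{\bx}$; moreover $\phi_{\bx}$ inherits $L$-smoothness from $h$. The auxiliary bound evaluated at $\by$ then reads $h(\by)-h(\bx)-\langle\nabla h(\bx),\by-\bx\rangle\geq\frac{1}{2L}\norm{\nabla h(\by)-\nabla h(\bx)}^2$. Writing the symmetric counterpart with the roles of $\bx$ and $\by$ exchanged and adding the two inequalities cancels the function-value terms and leaves precisely $\langle\by-\bx,\nabla h(\by)-\nabla h(\bx)\rangle\geq\frac{1}{L}\norm{\nabla h(\by)-\nabla h(\bx)}^2$, as desired.

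The routine parts are the descent lemma and the final algebraic cancellation. The one step deserving care — and the main obstacle — is the claim that the stationary point $\bx$ is a \emph{global} minimizer of $\phi_{\bx}$, which is exactly where convexity is indispensable. I would therefore flag explicitly that the lemma presumes $h$ is convex, stating that assumption outright if it is not already inherited from the surrounding context.
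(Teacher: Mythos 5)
The paper states this lemma without proof, citing \cite{Vu2017ConvexOptimization}, so there is no in-paper argument to compare against; your derivation is the standard proof of the Baillon--Haddad inequality and it is correct. The descent lemma, the bound $\phi(\bx)-\phi^\star\geq\frac{1}{2L}\|\nabla\phi(\bx)\|^2$, the shift $\phi_\bx(\bz)=h(\bz)-\langle\nabla h(\bx),\bz\rangle$, and the final symmetrization are all sound, and you correctly isolate the one non-routine step: that the stationary point $\bx$ of $\phi_\bx$ is a \emph{global} minimizer, which requires convexity. Your flag is well taken --- as literally stated, with only ``$L$-smooth'' as the hypothesis, the lemma is false (take $h(\bx)=-\tfrac{L}{2}\|\bx\|^2$: the left side is $-L\|\by-\bx\|^2$ while the right side is $+L\|\by-\bx\|^2$), so convexity is an omitted but essential assumption. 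This does not damage the paper downstream, since the only place the lemma is invoked (the proof of Lemma~\ref{lem:gamma}) applies it to $b=g+\lambda f$, which is strongly convex under Assumption~\ref{ass:funcs}; but you are right that the hypothesis should be stated explicitly, and your proof with convexity added is complete.
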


\begin{assumption}\label{ass:grad}
    Each agent $i\in\cV$ queries noisy samples of the gradients of its local functions $f_i(\cdot)$ and $g_i(\cdot)$, i.e., for some $\bz\in\bR^d$, the agent $i$ generates:
    \begin{align*}
        \tilde{\nabla} f_i(\bz) &= \nabla f_i(\bz) + \bxi_{f_i},\\
        \tilde{\nabla} g_i(\bz) &= \nabla g_i(\bz) + \bxi_{g_i},
    \end{align*}
    where $\bxi_{f_i}$ and $\bxi_{g_i}$ are independent random vectors in $\bR^d$ with zero mean, i.e., $\mathbb{E}[\bxi_{f_i}]=\mathbb{E}[\bxi_{g_i}]=\mathbf{0}$.
\end{assumption}
\begin{remark}\label{rem:stochastic}
    Assumption~\ref{ass:grad} implies that $\tilde{\nabla} f_i(\cdot)$ and $\tilde{\nabla} g_i(\cdot)$ are unbiased estimates of $\nabla f_i(\cdot)$ and $\nabla g_i(\cdot)$, respectively. In addition, we assume bounded noise in stochastic gradients, i.e., without loss of generality with probability 1, the following holds:
    \[
        \|\bxi_{f_i}\|\leq C_f, \qquad \|\bxi_{g_i}\|\leq C_g, \quad \text{for all} \ i\in\cV.
    \]
\end{remark}
\begin{assumption}\label{ass:funcs}
    The cumulative objective function $f(\bx)\coloneqq \sum_{i=1}^nf_i(\bx)$ is strongly convex with $\mu>0$, while the cumulative constraint function $g(\bx)\coloneqq \sum_{i=1}^ng_i(\bx)$ is convex.
\end{assumption}
\begin{remark}\label{rem:g-plus-f}
    Assumption~\ref{ass:funcs} only requires the sum of the private functions to be (strongly) convex, allowing for the possibility that the individual functions may be non-convex. Moreover, Assumption~\ref{ass:funcs} also implies that for some $\lambda > 0$, the function $g(\bx) + \lambda f(\bx)$ is strongly convex, with a modulus of convexity $\mu_\lambda\coloneqq\mu\lambda > 0$, and that the optimal solution of~\eqref{eq:bilevl-cent} is unique if it exists. Moreover, from Assumption~\ref{ass:grad}, since the noise is bounded, we define $C\coloneqq \max\limits_{i\in cV}\|\bxi_i\|$, where $\bxi_i\coloneqq \bxi_{g_i}+\lambda\bxi_{f_i}$.
\end{remark}
\begin{assumption}\label{ass:smoothness}
    The local functions $\{f_i\}$ and $\{g_i\}$ are $\{L_{f_i}\}$ and $\{L_{g_i}\}$-smooth, respectively.
\end{assumption}
\begin{remark}\label{rem:L}
    Assumption~\ref{ass:smoothness} implies that the function $g_i(\bx)+\lambda f_i(\bx)$ is $L_i\coloneqq L_{g_i}+\lambda L_{f_i}$-smooth. In addition, the function $g(\bx) + \lambda f(\bx)$ defined in Remark~\ref{rem:g-plus-f} is $\bar{L}\coloneqq \sum\nolimits_{i\in\cV} L_{i}$-smooth.
\end{remark}

\begin{lemma}\label{lem:opt}
    Assume Assumption~\ref{ass:funcs} holds. Let $\bx^*$ denote the unique optimizer of \eqref{eq:bilevl-cent}, and let $\bx_\lambda^*$ represent the minimizer of $g(\cdot)+\lambda f(\cdot)$. Then, the norm of the difference, $\|\bx^* - \bx_\lambda^*\|$, converges to zero as $\lambda \to 0$.
\end{lemma}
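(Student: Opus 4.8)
The plan is to treat $\bx_\lambda^*$ as a Tikhonov-type regularization of the bilevel problem and to argue by a compactness/limit-point argument, since mere convexity of $g$ precludes a quantitative rate. I would write $\cX^*\coloneqq\argmin_{\bx\in\bR^d} g(\bx)$ for the (closed, convex, nonempty) lower-level solution set and $g^*\coloneqq\min_{\bx} g(\bx)$; by definition $\bx^*$ is the unique minimizer of the strongly convex $f$ over $\cX^*$, while $\bx_\lambda^*$ is the unique minimizer of the strongly convex $g+\lambda f$ (Remark~\ref{rem:g-plus-f}).

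First I would extract two governing inequalities from the optimality of $\bx_\lambda^*$. Comparing against $\bx^*\in\cX^*$ gives $g(\bx_\lambda^*)+\lambda f(\bx_\lambda^*)\le g(\bx^*)+\lambda f(\bx^*)=g^*+\lambda f(\bx^*)$. Since $g(\bx_\lambda^*)\ge g^*$, dropping this term and dividing the surplus by $\lambda>0$ yields $f(\bx_\lambda^*)\le f(\bx^*)$, whereas rearranging the same inequality gives $0\le g(\bx_\lambda^*)-g^*\le \lambda\bigl(f(\bx^*)-f(\bx_\lambda^*)\bigr)$.

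Next, the crucial step is uniform boundedness of $\{\bx_\lambda^*\}_{\lambda>0}$. Strong convexity of $f$ makes it coercive, so the sublevel set $\{\bx:f(\bx)\le f(\bx^*)\}$ is bounded; by the first inequality every $\bx_\lambda^*$ lies in this fixed bounded set. Boundedness (together with continuity of $f$) then forces the right-hand side $\lambda\bigl(f(\bx^*)-f(\bx_\lambda^*)\bigr)\to 0$ as $\lambda\to 0$, so $g(\bx_\lambda^*)\to g^*$.

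Finally I would run the limit-point argument. Taking any $\lambda_k\to 0$, boundedness lets me pass to a subsequence with $\bx_{\lambda_k}^*\to\bar{\bx}$. Continuity of $g$ and $g(\bx_{\lambda_k}^*)\to g^*$ give $g(\bar{\bx})=g^*$, i.e.\ $\bar{\bx}\in\cX^*$; continuity of $f$ with $f(\bx_\lambda^*)\le f(\bx^*)$ gives $f(\bar{\bx})\le f(\bx^*)$, so $\bar{\bx}$ minimizes $f$ over $\cX^*$ and, by uniqueness of $\bx^*$, $\bar{\bx}=\bx^*$. Since the family is bounded and every convergent subsequence has the same limit $\bx^*$, a standard contradiction argument shows the whole family converges, giving $\|\bx^*-\bx_\lambda^*\|\to 0$. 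The main obstacle is this boundedness step, which is exactly where the strong convexity of the global $f$ (rather than of each $f_i$) is essential: without the coercivity it supplies, the regularized minimizers could escape to infinity and the limit-point argument would break down; one also needs $\cX^*$ to be nonempty and closed, which follows from the assumed existence of $\bx^*$ and convexity of $g$.
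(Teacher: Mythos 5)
Your proof is correct, and at its core it rests on the same comparison the paper uses: evaluating $g+\lambda f$ at $\bx^*$ versus $\bx_\lambda^*$, dividing the surplus by $\lambda$ to get $f(\bx_\lambda^*)\le f(\bx^*)$ and $0\le g(\bx_\lambda^*)-g^*\le\lambda\bigl(f(\bx^*)-f(\bx_\lambda^*)\bigr)$, and then invoking uniqueness of $\bx^*$. The difference is in how the limit is handled, and here your version is genuinely more careful than the paper's. The paper writes $\hat{\bx}\coloneqq\lim_{\lambda\to0}\bx_\lambda^*$ and passes to the limit in its inequality~\eqref{eq:g-plus-f} without ever justifying that this limit exists; it also invokes the $\frac{\mu\lambda}{2}\|\bx^*-\bx_\lambda^*\|^2$ strong-convexity term, which it then does not really need. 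You instead use only the optimality of $\bx_\lambda^*$, establish uniform boundedness of $\{\bx_\lambda^*\}$ via coercivity of the strongly convex $f$ (the sublevel set $\{\bx: f(\bx)\le f(\bx^*)\}$ is bounded and contains every $\bx_\lambda^*$), extract convergent subsequences, identify every limit point with $\bx^*$ through continuity of $f$ and $g$ and uniqueness, and conclude convergence of the whole family. What your approach buys is a complete argument that closes the paper's implicit gap about existence of the limit; what the paper's approach buys is brevity and, in principle, access to a quantitative handle $\frac{\mu\lambda}{2}\|\bx^*-\bx_\lambda^*\|^2\le\lambda\bigl(f(\bx^*)-f(\bx_\lambda^*)\bigr)$, i.e.\ $\|\bx^*-\bx_\lambda^*\|^2\le\frac{2}{\mu}\bigl(f(\bx^*)-f(\bx_\lambda^*)\bigr)$, though the paper does not exploit this. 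One small point to make explicit in your write-up: nonemptiness and closedness of $\cX^*=\argmin g$ follow from the assumed existence of $\bx^*$ and continuity of $g$, as you note at the end, and your claim that $\bx^*$ is the unique minimizer of $f$ over $\cX^*$ is exactly the uniqueness the lemma assumes, so no circularity arises.
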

\begin{proof}
    Note that the bilevel optimization problem in \eqref{eq:bilevl-cent} can be equivalently expressed as $\min\limits_{\by\in\argmin g(\bx)}f(\by)$. Thus, if $\bx^*$ is the unique optimizer of \eqref{eq:bilevl-cent}, then $g(\bx^*)\leq g(\bx)$ for all $\bx$. Since $g+\lambda f$ is $\mu\lambda$-strongly convex, it follows that
    \begin{align}\label{eq:g-plus-f}
        g(\bx^*) + \lambda f(\bx^*) \geq g(\bx_\lambda^*) + \lambda f(\bx_\lambda^*) + \frac{\mu\lambda}{2}\|\bx^*-\bx_\lambda^*\|^2,
    \end{align}
    which implies that in the limit $\lambda\to0$, $g(\bx^*)\geq g(\lim\limits_{\lambda\to0}\bx_\lambda^*)$ indicating that $\hat{\bx}\coloneqq\lim\limits_{\lambda\to0}\bx_\lambda^*$ is also a feasible solution to \eqref{eq:bilevl-cent}, with $\hat{\bx}$ also being a minimizer of $g(\bx)$. Taking the limit $\lambda\to0$ in \eqref{eq:g-plus-f} and using the continuity of $f$, we obtain that $f(\bx^*)\geq f(\hat{\bx})$. From the uniqueness of $\bx^*$, it must follow that $\lim\limits_{\lambda\to0}\bx_\lambda^*=\bx^*$.
\end{proof}
\begin{remark}
    Lemma~\ref{lem:opt} implies that for sufficiently small values of $\lambda$, the optimal solution $\bx_\lambda^*$ remains close to the unique optimal solution of~\eqref{eq:bilevl-cent}. Henceforth, with a slight abuse of notation, we denote $\bx_\lambda^*$ by $\bx^*$, assuming $\lambda$ is sufficiently small.
\end{remark}

\section{MAIN RESULTS}\label{sec:mainres}
We introduce the Bilevel Distributed Aggregated Stochastic Gradient (BDASG) algorithm, which achieves linear convergence under suitable assumptions. At each iteration, the algorithm aggregates stochastic gradients and integrates consensus with gradient updates to converge to the optimal solution. The detailed steps are outlined in Algorithm~\ref{alg:BDASG}. To this end, we introduce the following notation:
\begin{align}\label{eq:b-def}
    h_i(\bx_i(k))&\coloneqq\!\tilde{\nabla}g_i(\bx_i(k))\!+\!\lambda \tilde{\nabla}f_i(\bx_i(k)) \nonumber\\
    &= \underbrace{\nabla g_i(\bx_i(k))\!+\!\lambda\nabla f_i(\bx_i(k))}_{\coloneqq\nabla b_i(\bx_i(k))} + \underbrace{\bxi_{g_i}\!+\!\lambda\bxi_{f_i}}_{\bxi_i}
\end{align}

\begin{algorithm}
    \caption{Bilevel Distributed Aggregated Stochastic Gradient (BDASG) algorithm}
    \begin{algorithmic}[1]
    \State \textbf{Initialize}: Each agent $i\in\cV$ initializes at an arbitrary state $\bx_i(0)$ and $\by_i(0)=\tilde{\nabla}g_i(\bx_i(0))+\lambda\tilde{\nabla}f_i(\bx_i(0))$.
        \While{$k = 0, 1, 2, \dots$}\Comment{Iterate until convergence}
            \State $\bx_i(k\!+\!1) \gets \sum\limits_{j\in\cN_i}a_{ij}\bx_j(k)-\alpha\by_i(k)$
            \State $\by_i(k\!+\!1) \gets \sum\limits_{j\in\cN_i}a_{ij}\!\by_j(k)\!+\!h_i(\bx_i(k\!+\!1))\!-\!h_i(\bx_i(k))$
        \EndWhile
    \end{algorithmic}
    \label{alg:BDASG}
\end{algorithm}

Algorithm~\ref{alg:BDASG} can be summarized as the following set of iterative updates:
\begin{align}\label{eq:alg}
    \bX(k+1) &= A\bX(k) - \alpha\bY(k),\nonumber\\
    \bY(k+1) &= A\bY(k) + H(\bX(k+1)) - H(\bX(k)),
\end{align}
where $H(\bX(k))\coloneqq \tilde{\nabla}G(\bX(k)) + \lambda\tilde{\nabla}F(\bX(k))$, and $k$ is the iteration number. Here, the functions $G(\bX(k))$ and $F(\bX(k))$ are defined as:
\begin{equation*}
    G(\bX(k)) = \left[\begin{array}{c}
         g_1(\bx_1(k)) \\
         g_2(\bx_2(k)) \\
         \vdots \\
         g_n(\bx_n(k))
    \end{array}\right]\!\!, F(\bX(k)) = \left[\begin{array}{c}
         f_1(\bx_1(k)) \\
         f_2(\bx_2(k)) \\
         \vdots \\
         f_n(\bx_n(k))
    \end{array}\right]\!.
\end{equation*}
The averaged dynamics follows:
\begin{align}\label{eq:avg-dynamics}
    \bar{\bx}(k+1) &= \bar{\bx}(k) - \alpha \bar{\by}(k), \nonumber \\
    \bar{\by}(k+1) &= \bar{H}(\bX(k+1)) \coloneqq \frac{1}{n}\sum\nolimits_{i=1}^nh_i(\bx_i(k+1)),
\end{align}
where $h_i(\bx_i)\coloneqq\tilde{\nabla}g_i(\bx_i)+\lambda\tilde{\nabla}f_i(\bx_i)$.\\
\\
Now, that all the definitions and assumptions have been stated, we look at a series of lemmas and their proofs which are essential in the analysis of our algorithm.
This section analyzes the convergence rate of BDASG, showing that it achieves linear convergence in expectation to a neighborhood of the solution of problem~\eqref{eq:bilevl-cent}. Inspired by~\cite{doan2018aggregating}, our analysis consists of two steps: first, we establish that the nodes in the graph reach consensus at a linear rate, meaning the difference between each node's value \(\bx_i\) and the average state \(\bar{\bx}\) diminishes linearly, i.e., \(\|\bx_i - \bar{\bx}\| \to 0\) at a linear rate (Lemma\eqref{lem:X-dagger}); second, we show that the average state \(\bar{\bx}\) itself converges linearly to the optimal solution \(\bxs\) (Theorem~\ref{thm:avg-red}), demonstrating that BDASG achieves linear convergence to the optimal solution. Before we begin our analysis, we will first state an assumption, the proof to which will be presented at the end. \\
\\
\textbf{Gradient Reduction at a Linear Rate}: Given some positive constants \(B\), \(D\), and \(\gamma \in (\sigma_2, 1)\), the sequence
\(\{y(k)\}\) generated by Algorithm 1 satisfies
\begin{equation} \label{eq:y_grad_red}
    \mathbb{E}[\|Y^{\dagger}(k)\|] \leq D \gamma^k + B, \quad  k \geq 0,
\end{equation}
where $\sigma_2$ is the second largest singular value of the graph adjacency matrix. We will prove this statement later in the analysis. We begin by establishing the linear convergence of \(\|\textbf{x}_i - \bar{\textbf{x}}\| \to 0\) under condition \eqref{eq:y_grad_red}. To this end, we first present a series of supporting lemmas, followed by the proof of the main result in Theorem~\ref{thm:avg-red}.
\begin{lemma}\label{lem:X-dagger}
    Let $\{\bx_i(k), \by_i(k)\}$ be the sequence generated by Algorithm~\ref{alg:BDASG}, and let $\sigma_2$ denote the second largest singular value of the graph adjacency matrix $A$. Then the norm of the overall consensus error satisfies the following inequality:
    \[
        \bE[\|\bX^\dagger(k)\|] \leq \sigma_2^k\bE[\|\bX^\dagger(0)\|] + \alpha\sum\limits_{t=0}^{k-1}\sigma_2^{k-1-t}\bE[\|\bY^\dagger(t)\|]
    \]
\end{lemma}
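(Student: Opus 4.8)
The plan is to derive a one-step contraction for the consensus error and then unroll it. Write $P \coloneqq I - \frac{1}{n}\mathds{1}\mathds{1}^\intercal$ for the orthogonal projection onto the complement of the all-ones direction, so that $\bX^\dagger(k) = P\bX(k)$ and, by definition, $\bY^\dagger(k) = P\bY(k)$. Applying $P$ to the first line of \eqref{eq:alg} gives $\bX^\dagger(k+1) = PA\bX(k) - \alpha P\bY(k)$. Since the mixing matrix $A$ is doubly stochastic ($A\mathds{1} = \mathds{1}$ and $\mathds{1}^\intercal A = \mathds{1}^\intercal$), it commutes with $P$, because $PA = A - \frac{1}{n}\mathds{1}\mathds{1}^\intercal = AP$. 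Hence $PA\bX(k) = AP\bX(k) = A\bX^\dagger(k)$, which yields the clean deterministic recursion
\begin{equation*}
    \bX^\dagger(k+1) = A\bX^\dagger(k) - \alpha\bY^\dagger(k).
\end{equation*}

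Next I would take norms and use the triangle inequality to get $\|\bX^\dagger(k+1)\| \le \|A\bX^\dagger(k)\| + \alpha\|\bY^\dagger(k)\|$. The key step is the contraction $\|A\bX^\dagger(k)\| \le \sigma_2\|\bX^\dagger(k)\|$. Since each column of $\bX^\dagger(k)=P\bX(k)$ is orthogonal to $\mathds{1}$, we have $\frac{1}{n}\mathds{1}\mathds{1}^\intercal\bX^\dagger(k)=0$, so $A\bX^\dagger(k) = \bigl(A-\frac{1}{n}\mathds{1}\mathds{1}^\intercal\bigr)\bX^\dagger(k)$; because $A$ is doubly stochastic its top singular value is $\sigma_1 = 1$ with singular direction $\mathds{1}/\sqrt{n}$, whence $\|A-\frac{1}{n}\mathds{1}\mathds{1}^\intercal\|_2 = \sigma_2$ and the bound follows. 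Taking expectations (using linearity and monotonicity of $\bE[\cdot]$, the recursion being deterministic in the current states) produces the one-step inequality
\begin{equation*}
    \bE[\|\bX^\dagger(k+1)\|] \le \sigma_2\,\bE[\|\bX^\dagger(k)\|] + \alpha\,\bE[\|\bY^\dagger(k)\|].
\end{equation*}

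Finally I would unroll this scalar-type recursion $u_{k+1} \le \sigma_2 u_k + \alpha v_k$ by induction on $k$. The base case is trivial, and assuming the claimed bound at step $k$, substituting into the one-step inequality and shifting the summation index turns the geometric weights $\sigma_2^{k-1-t}$ into $\sigma_2^{k-t}$ while absorbing the new term $\alpha\bE[\|\bY^\dagger(k)\|]$ as the $t=k$ summand, which is exactly the claim at step $k+1$.

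I expect the main obstacle to be nothing in the algebra itself but rather the rigorous justification of the contraction step, namely confirming that $A$ is doubly stochastic (so that $P$ and $A$ commute and $\bX^\dagger$ remains in $\mathds{1}^\perp$ throughout the iteration, since $\mathds{1}^\intercal A\bX^\dagger(k) = \mathds{1}^\intercal\bX^\dagger(k) = 0$), and that the operative norm on $\mathds{1}^\perp$ is precisely $\sigma_2$ rather than $\sigma_1 = 1$. I would therefore flag the implicit standing assumption that $A$ is doubly stochastic with a spectral gap ($\sigma_2 < 1$), as this is what ultimately converts the summed bound into linear decay once the gradient-reduction estimate \eqref{eq:y_grad_red} is supplied.
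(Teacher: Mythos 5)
Your proposal is correct and follows essentially the same route as the paper: project with $I-\frac{1}{n}\mathds{1}\mathds{1}^\intercal$, use double stochasticity to commute the projection past $A$ and obtain $\bX^\dagger(k+1)=A\bX^\dagger(k)-\alpha\bY^\dagger(k)$, and then exploit that $A$ contracts by $\sigma_2$ on $\mathds{1}^\perp$; the only cosmetic difference is that you take norms first and unroll a scalar recursion, whereas the paper unrolls the matrix recursion to $A^k\bX^\dagger(0)-\alpha\sum_{t}A^{k-1-t}\bY^\dagger(t)$ and bounds afterwards. Your explicit justification of $PA=AP$ and of $\|A-\frac{1}{n}\mathds{1}\mathds{1}^\intercal\|_2=\sigma_2$ (implicitly assumed by the paper) is a welcome addition but not a different argument.
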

\begin{proof}
    The proof follows by iteratively expanding $\bX^\dagger(k)$ as:
    \begin{align*}
        X^\dagger(k) &=  \left(I\!-\!\frac{1}{n} \mathbf{1} \mathbf{1}^\intercal \right)\!\{\!AX(k\!-\!1)-\alpha Y(k\!-\!1)\!\} \\
        &= A X^{\dagger} (k-1) - \alpha Y^\dagger(k-1) \\
        &= A^k X^\dagger(0) - \alpha \sum\nolimits_{t=0}^{k-1} A^{k-1-t} Y^\dagger(t) \\
        \implies\!\!\!\bE[\| X^\dagger(k) \|] &\leq \!\sigma_2^k \bE[\| X^\dagger(0) \|] \!+\!\alpha \!\!\sum_{t=0}^{k-1}\!\!\sigma_2^{k\!-\!1\!-\!t}\bE[\| Y^\dagger(t) \|],
    \end{align*}
    which completes the proof.
\end{proof}
Assuming the linear gradient reduction property in \eqref{eq:y_grad_red} (as established in Theorem~\ref{thm:grad-red}) and leveraging Lemma~\ref{lem:X-dagger}, we demonstrate that the sequence generated by Algorithm~\ref{alg:BDASG} exhibits linear convergence of \(\|\bx_i - \bar{\bx}\| \rightarrow 0\).\\
By \eqref{eq:y_grad_red} and since \(\gamma \in (\sigma_2,1)\), the second term on the right-hand side of Lemma~\ref{lem:X-dagger} is given as:
\begin{align*}
    \sum_{t=0}^{k-1} \sigma_2^{k-1-t} \mathbb{E} \left[ \|Y^{\dagger} (t) \|\right] &\leq \sum_{t=0}^{k-1} \sigma_2^{k-1-t} \left(D \gamma^t + B\right) \\
    \implies\sum_{t=0}^{k-1} \sigma_2^{k-1-t} \mathbb{E} \left[ \|Y^{\dagger} (t) \|\right] &\leq \frac{D \gamma^k}{\gamma - \sigma_2} + \frac{B}{1 - \sigma_2}.   
\end{align*}
Therefore
\begin{align}\label{eq:consensus}
    \mathbb{E} \left[ \|X^{\dagger} (k) \|\right] \leq 
\left( \frac{\mathbb{E} \left[ \|X^{\dagger} (0) \|\right] + D \alpha}{\gamma - \sigma_2} \right)\!\gamma^k 
+ \frac{B \alpha}{1\!-\!\sigma_2}.
\end{align}
Now that we have established the linear convergence of consensus, we state some additional results which will help us in proving that \(\bar{\bx}\) converges to \(\bx^*\) linearly.
\begin{lemma}\label{lem:gamma}
    Let Assumptions~\ref{ass:grad}-\ref{ass:smoothness} hold. Let $\{\bx_i(k), \by_i(k)\}$ be the sequence generated by Algorithm~\ref{alg:BDASG} for all $i\in\cV$. Then the averaged state vector $\bar{\bx}(k)$ satisfies:
    \[
        \left\|\bar{\bx}(k)-\bx^*-\frac{\alpha}{n}\sum\nolimits_{i=1}^n\nabla b_i(\bar{\bx}(k))\right\| \leq \theta\|\bar{\bx}(k)-\bx^*\|,
    \]
    where $\theta=\sqrt{1-\frac{\alpha\mu_\lambda^2}{n}\left(\frac{2}{\bar{L}}-\frac{\alpha}{n}\right)}\in (0,1)$, and $\nabla b_i(\bx(k))$ is defined in~\eqref{eq:b-def}.
\end{lemma}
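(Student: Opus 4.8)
The plan is to treat this as a standard one-step contraction estimate for the gradient map of the regularized objective, exploiting that $\frac{1}{n}\sum_{i=1}^n\nabla b_i=\frac{1}{n}\nabla(g+\lambda f)$. First I would observe that, by the definition in~\eqref{eq:b-def}, $\sum_{i=1}^n\nabla b_i(\bar{\bx}(k))=\nabla g(\bar{\bx}(k))+\lambda\nabla f(\bar{\bx}(k))$ is exactly the gradient of the regularized function $b:=g+\lambda f$, which by Remark~\ref{rem:g-plus-f} is $\mu_\lambda$-strongly convex and by Remark~\ref{rem:L} is $\bar{L}$-smooth, and whose unique minimizer is $\bx^*$ (under the abuse of notation fixed after Lemma~\ref{lem:opt}). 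Since $\bx^*$ minimizes $b$, we have $\nabla b(\bx^*)=\mathbf{0}$, so the vector inside the norm rewrites as $(\bar{\bx}(k)-\bx^*)-\frac{\alpha}{n}\big(\nabla b(\bar{\bx}(k))-\nabla b(\bx^*)\big)$. Note that $\nabla b_i$ denotes the deterministic (noise-free) gradient, so no expectation is needed and the statement is a purely deterministic estimate.

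Next I would square the norm and expand, writing $\bz:=\bar{\bx}(k)-\bx^*$ and $\bw:=\nabla b(\bar{\bx}(k))-\nabla b(\bx^*)$:
\[
    \Big\|\bz-\tfrac{\alpha}{n}\bw\Big\|^2=\|\bz\|^2-\tfrac{2\alpha}{n}\langle\bz,\bw\rangle+\tfrac{\alpha^2}{n^2}\|\bw\|^2.
\]
The core of the argument is to bound the two mixed terms using the two structural properties in the correct order. I would first apply co-coercivity (Lemma~\ref{lem:coer}) to the cross term, $\langle\bz,\bw\rangle\geq\frac{1}{\bar{L}}\|\bw\|^2$, which collapses the expansion to $\|\bz\|^2-\big(\frac{2\alpha}{n\bar{L}}-\frac{\alpha^2}{n^2}\big)\|\bw\|^2$. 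Under a step-size restriction $\alpha\leq 2n/\bar{L}$, the coefficient $\frac{2\alpha}{n\bar{L}}-\frac{\alpha^2}{n^2}$ is nonnegative, which is precisely what lets me then invoke strong convexity in the form $\|\bw\|\geq\mu_\lambda\|\bz\|$ to lower-bound $\|\bw\|^2$ and push the estimate in the right direction. Substituting yields $\big(1-\mu_\lambda^2(\frac{2\alpha}{n\bar{L}}-\frac{\alpha^2}{n^2})\big)\|\bz\|^2=\theta^2\|\bz\|^2$; taking square roots gives the claim, and verifying $\theta\in(0,1)$ reduces to checking $0<\frac{\alpha\mu_\lambda^2}{n}(\frac{2}{\bar{L}}-\frac{\alpha}{n})<1$, again a step-size condition.

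The main obstacle is the ordering and sign-control of these two inequalities: co-coercivity must be applied \emph{before} strong convexity because it converts the cross term into a $\|\bw\|^2$ contribution, and only after confirming the resulting coefficient is nonnegative (the step-size bound) may the strong-convexity bound $\|\bw\|\geq\mu_\lambda\|\bz\|$ be substituted without flipping the inequality. A secondary point to keep clean is the justification of $\|\bw\|\geq\mu_\lambda\|\bz\|$ from $\mu_\lambda$-strong convexity (via $\langle\bz,\bw\rangle\geq\mu_\lambda\|\bz\|^2$ and Cauchy--Schwarz), together with ensuring that the admissible step-size range is consistent with the one needed elsewhere for the consensus and gradient-reduction results.
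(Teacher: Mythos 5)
Your proposal is correct and follows the same skeleton as the paper's proof: expand the squared norm, apply co-coercivity (Lemma~\ref{lem:coer}) to the cross term using $\nabla b(\bx^*)=\mathbf{0}$, check that the resulting coefficient of $\left\|\nabla b(\bar{\bx}(k))\right\|^2$ is nonnegative under the step-size restriction $0<\alpha/n<2/\bar{L}$, and then lower-bound that gradient norm by $\mu_\lambda^2\|\bar{\bx}(k)-\bx^*\|^2$. The one place you diverge is in how that last lower bound is obtained: you use strong monotonicity of the gradient, $\langle\bz,\bw\rangle\geq\mu_\lambda\|\bz\|^2$, combined with Cauchy--Schwarz to get $\|\bw\|\geq\mu_\lambda\|\bz\|$, whereas the paper chains the PL inequality $\|\nabla b(\bar{\bx}(k))\|^2\geq 2\mu_\lambda\bigl(b(\bar{\bx}(k))-b(\bx^*)\bigr)$ with the quadratic-growth property of PL functions. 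The two routes give the identical constant $\theta^2=1-\frac{\mu_\lambda^2\alpha}{n}\bigl(\frac{2}{\bar{L}}-\frac{\alpha}{n}\bigr)$, but the paper's choice is deliberate: the authors state that, apart from Lemma~\ref{lem:opt}, the analysis is meant to rely only on the PL condition (to support their conjectured extension beyond strong convexity), and your strong-monotonicity argument is the one step that would not survive that relaxation. A minor omission relative to the paper: you reduce $\theta\in(0,1)$ to a step-size condition but do not actually verify $\theta^2>0$; the paper does this by completing the square, $\theta^2=\bigl(\frac{\alpha\mu_\lambda}{n}-\frac{\mu_\lambda}{\bar{L}}\bigr)^2+\bigl(1-\frac{\mu_\lambda^2}{\bar{L}^2}\bigr)\geq 0$, using $\mu_\lambda\leq\bar{L}$, which holds for every admissible $\alpha$ rather than imposing a further constraint.
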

\begin{proof} Recall that
\begin{align}\label{eq:p1}
    &\left\| \bar{\bx}(k) - \bx^* - \frac{\alpha}{n} \sum\nolimits_{i=1}^n \nabla b_i(\bar{\bx}(k)) \right\|^2 \nonumber\\
    &=\| \bar{\bx}(k) - \bx^* \|^2 + \frac{\alpha^2}{n^2} \left\|\sum\nolimits_{i=1}^n \nabla b_i(\bar{\bx}(k)) \right\|^2 \nonumber \\
    &\ \ \ \ -  \frac{2\alpha}{n} \left\langle \bar{\bx}(k) - \bx^*, \sum\nolimits_{i=1}^n \nabla b_i(\bar{\bx}(k)) \right\rangle \nonumber\\
   & \leq \| \bar{\bx}(k) - \bx^* \|^2 + \frac{\alpha^2}{n^2} \left\|\sum\nolimits_{i=1}^n \nabla b_i(\bar{\bx}(k)) \right\|^2 \nonumber \\
   &\ \ \ -  \frac{2\alpha}{n\bar L} \left\|\sum\nolimits_{i=1}^n \nabla b_i(\bar{\bx}(k)) \right\|^2 \ (\text{From Lemma~\ref{lem:coer}}) \nonumber\\
   &= \| \bar{\bx}(k) - \bx^* \|^2 -\frac{\alpha}{n} \left( \frac{2}{\bar L} - \frac{\alpha}{n} \right) \!\!\!\!\!\!\!\!\!\!\!\!\!\!\!\underbrace{\left\|\sum\nolimits_{i=1}^n \nabla b_i(\bar{\bx}(k)) \right\|^2}_{\leq 2\mu_\lambda\left(\sum_{i=1}^n b_i(\bar{\bx}(k))-\sum_{i=1}^n b_i(\bar{\bx}^*)\right)}\!\!\!\!\!\!\!\!\!\!\!\!\!\!\!,
\end{align}
where the last step follows from applying PL-inequality to the function $b(\bx)=g(x)+\lambda f(x)$. Recall that PL-functions exhibit at least quadratic growth resulting in the following upper-bound on~\eqref{eq:p1} as:
\begin{align*}
    \eqref{eq:p1}&\leq \| \bar{\bx}(k) - \bx^* \|^2 -\frac{2 \mu_\lambda\alpha}{n} \left( \frac{2}{\bar L} - \frac{\alpha}{n} \right) \frac{\mu_\lambda}{2} \| \bar{\bx}(k) - \bx^* \|^2 \\
    &= \left(1 -\frac{ \mu_\lambda^2 \alpha}{n} \left( \frac{2}{\bar L} - \frac{\alpha}{n} \right)\right) \| \bar{\bx}(k) - \bx^* \|^2
\end{align*}
Defining $\theta^2 = 1 - \frac{\mu_{\lambda}^2 \alpha}{n} \left(  \frac{2}{\bar L} - \frac{\alpha}{n} \right)$, and considering  \(0 < \frac{\alpha}{n} < \frac{2}{\bar L}\), we have  \(\theta^2 < 1\). In order to complete the proof, we still need to show that \(\theta^2 > 0\). Observe that
\begin{align*}
    \theta^2 &= 1 - \frac{\mu_\lambda^2 \alpha}{n} \left(  \frac{2}{\bar L} - \frac{\alpha}{n} \right) = \frac{\alpha^2}{n^2} \mu_\lambda^2 - 2 \mu_\lambda^2 \frac{2}{\bar L} \frac{\alpha}{n} + 1 \\
    \implies \theta^2  &= \left( \frac{\alpha \mu_{\lambda}}{n}- \frac{\mu_{\lambda}}{\bar{L}} \right)^2 + \left( 1 - \frac{\mu_{\lambda}^2}{\bar{L}^2} \right) \geq 0,
\end{align*}
implying that $\theta = \sqrt{1 - \frac{\mu_\lambda^2 \alpha}{n} \left(\frac{2}{\bar L} - \frac{\alpha}{n} \right)} \in (0,1)$, which completes the proof.
\end{proof}
Using the Lemma stated above, we now prove linear convergence of \(\bar{\bx}\) to \(\bx^*\) , which will complete our analysis of the algorithm.
\begin{theorem}\label{thm:avg-red}
    Let Assumptions~\ref{ass:grad}-\ref{ass:smoothness} hold. Let $\{\bx_i(k), \by_i(k)\}$ be the sequence generated by Algorithm~\ref{alg:BDASG} for all $i\in\cV$. Given $\gamma\in(\sigma_2,1)$, and let the step size $\alpha$ be chosen as
    \[
        \frac{n}{\bar{L}}+n\sqrt{\frac{(\sigma_2+\tau)^2-1}{\mu_\lambda}\!+\!\frac{1}{\bar{L}^2}}\leq\alpha\leq\frac{n}{\bar{L}}+n\sqrt{\frac{\gamma^2\!-\!1}{\mu_\lambda}\!+\!\frac{1}{\bar{L}^2}},
    \]
    where $\tau>0$ is a tuning parameter and $\sigma_2$ is the second largest singular value of the adjacency matrix $A$. Then the averaged state vector $\bar{\bx}(k)$ converges linearly to the neighborhood of $\bx^*$ in expectation, i.e.,
    \begin{align*}
        &\bE[\|\bar{\bx}(k+1)-\bx^*\|] \leq\Bigg(\bE[\|\bar{\bx}(0)-\bx^*\|]+ \\
        &\frac{\tilde{L}\sqrt{d}\alpha\bE[\|\bX^\dagger(0)\|]}{n(\gamma\!-\!\sigma_2)}\!\!\Bigg)\!\gamma^{k+1}\!\!+\!\!\frac{c\alpha}{n(1\!-\!\gamma)}\!\!+\!\frac{\tilde{L}\sqrt{d}\alpha^2}{n(1\!-\!\gamma)}\!\!\left(\!\!\frac{B}{\gamma\!-\!\sigma_2}\!+\!\frac{D}{\gamma}\!\right)
    \end{align*}
\end{theorem}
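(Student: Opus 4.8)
The plan is to reduce the vector dynamics to a single scalar linear recursion for the quantity $a_k \coloneqq \bE[\|\bar{\bx}(k)-\bx^*\|]$ of the form $a_{k+1}\le \theta\, a_k + (\text{consensus term}) + (\text{noise term})$, and then unroll that recursion against the already-established consensus decay~\eqref{eq:consensus}. The contraction factor $\theta$ will come from Lemma~\ref{lem:gamma}, the consensus term from smoothness plus norm equivalence, and the noise term from the bounded-noise budget of Remark~\ref{rem:g-plus-f}; the persistence of the latter two is what produces convergence only to a neighborhood rather than to $\bx^*$ exactly.

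First I would start from the averaged primal update $\bar{\bx}(k+1)=\bar{\bx}(k)-\tfrac{\alpha}{n}\sum_{i=1}^n h_i(\bx_i(k))$, subtract $\bx^*$, write $h_i(\bx_i(k))=\nabla b_i(\bx_i(k))+\bxi_i$ as in~\eqref{eq:b-def}, and add and subtract the gradient evaluated at the average. This gives the splitting
\begin{align*}
\bar{\bx}(k+1)-\bx^* &= \Big(\bar{\bx}(k)-\bx^*-\tfrac{\alpha}{n}\sum_{i=1}^n\nabla b_i(\bar{\bx}(k))\Big) \\
&\quad -\tfrac{\alpha}{n}\sum_{i=1}^n\big(\nabla b_i(\bx_i(k))-\nabla b_i(\bar{\bx}(k))\big)-\tfrac{\alpha}{n}\sum_{i=1}^n\bxi_i.
\end{align*}
Taking norms and applying the triangle inequality, the first bracket is bounded by $\theta\|\bar{\bx}(k)-\bx^*\|$ via Lemma~\ref{lem:gamma}; the second is controlled by $L_i$-smoothness (Assumption~\ref{ass:smoothness} and Remark~\ref{rem:L}), so that $\sum_i\|\bx_i(k)-\bar{\bx}(k)\|$ becomes a constant multiple of $\|\bX^\dagger(k)\|$, the norm-equivalence Lemma~\ref{lem:norm-equiv} supplying the $\tilde{L}\sqrt{d}$ factor; and the third is bounded almost surely by the noise budget $c$ of Remark~\ref{rem:g-plus-f}. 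Taking expectations then yields the scalar recursion
\begin{align*}
a_{k+1}\le \theta\, a_k + \tfrac{\tilde{L}\sqrt{d}\,\alpha}{n}\,\bE[\|\bX^\dagger(k)\|]+\tfrac{c\alpha}{n}.
\end{align*}

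Next I would substitute the consensus bound~\eqref{eq:consensus}, namely $\bE[\|\bX^\dagger(k)\|]\le E_0\gamma^k+F$ with $E_0\coloneqq\frac{\bE[\|\bX^\dagger(0)\|]+D\alpha}{\gamma-\sigma_2}$ and $F\coloneqq\frac{B\alpha}{1-\sigma_2}$ (itself resting on the gradient-reduction property~\eqref{eq:y_grad_red}, to be proved later in Theorem~\ref{thm:grad-red}), and unroll the recursion from $k$ down to $0$:
\begin{align*}
a_{k+1}\le \theta^{k+1}a_0+\tfrac{\tilde{L}\sqrt{d}\,\alpha}{n}E_0\sum_{t=0}^{k}\theta^{k-t}\gamma^{t}+\Big(\tfrac{\tilde{L}\sqrt{d}\,\alpha}{n}F+\tfrac{c\alpha}{n}\Big)\sum_{t=0}^{k}\theta^{k-t}.
\end{align*}
Here the two-sided step-size window does the essential work. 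Its upper end forces $\theta\le\gamma$, so $\theta^{k+1}a_0\le\gamma^{k+1}a_0$ gives the transient and $\sum_t\theta^{k-t}\le\frac{1}{1-\gamma}$ gives the $\frac{1}{1-\gamma}$ steady-state floor; its lower end, through the tuning parameter $\tau$, guarantees $\sigma_2<\sigma_2+\tau\le\theta$, which keeps the cross sum $\sum_t\theta^{k-t}\gamma^t$ bounded by a constant multiple of $\gamma^{k+1}$ (and likewise keeps $\sum_t\theta^{k-t}\sigma_2^t$ finite if one substitutes Lemma~\ref{lem:X-dagger} directly instead of the presummed form). Collecting the $\gamma^{k+1}$ terms and the constant terms and plugging back the definitions of $E_0,F$ reproduces the transient coefficient $\frac{\tilde{L}\sqrt{d}\,\alpha\,\bE[\|\bX^\dagger(0)\|]}{n(\gamma-\sigma_2)}$ together with the two floor terms in the stated bound.

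I expect the main obstacle to be the bookkeeping of these coupled geometric sums rather than any single inequality. The sequence $\bE[\|\bX^\dagger(k)\|]$ driving the recursion is itself a sum of a $\gamma^k$-transient and a constant, so one must sum products of geometric sequences with three distinct ratios $\sigma_2<\theta\le\gamma<1$; making the denominators ($\gamma-\sigma_2$, $1-\gamma$, and the $\gamma$ appearing in the $D/\gamma$ term) land exactly as stated, while ensuring every ratio stays strictly inside $(0,1)$ so that no sum degenerates into polynomial growth, is the delicate part. This is precisely why the admissible step-size interval must be two-sided: it pins $\theta$ strictly between $\sigma_2$ and $\gamma$, which is the structural condition that lets the averaged-error decay inherit the common linear rate $\gamma$ already enjoyed by the consensus error.
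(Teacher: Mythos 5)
Your proposal follows essentially the same route as the paper's own proof: the identical add-and-subtract decomposition around $\frac{\alpha}{n}\sum_i\nabla b_i(\bar{\bx}(k))$, the contraction via Lemma~\ref{lem:gamma}, the smoothness-plus-norm-equivalence bound turning the gradient mismatch into $\frac{\tilde{L}\sqrt{d}\alpha}{n}\|\bX^\dagger(k)\|$, and the unrolling of the resulting scalar recursion against the consensus/gradient-reduction bounds. The only cosmetic difference is that you substitute the presummed form~\eqref{eq:consensus} while the paper plugs in Lemma~\ref{lem:X-dagger} and sums afterward; these are equivalent.
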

\begin{proof}
    From~\eqref{eq:avg-dynamics} and using $\bar{\by}(k) = \frac{1}{n} \sum\limits_{i=1}^{n}{h}_i(\bx_i(k))$ we have
\begin{align*}
 \|&\bar{\bx}(k+1) - \bx^*\| =\left\|\bar{\bx}(k) - \bx^* - \frac{\alpha}{n} \sum_{i=1}^{n} h_i(\bx_i(k))\right\| \\
    &= \left\| \bar{\bx}(k) - \bx^* - \frac{\alpha}{n} \sum_{i=1}^{n} h_i(\bx_i(k))- \frac{\alpha}{n} \sum_{i=1}^{n} \nabla b_i(\bar{\bx}(k)) \right. \nonumber \\
    &\ \ \ \ \left. +\frac{\alpha}{n} \sum_{i=1}^{n} \nabla b_i(\bar{\bx}(k))\right\| \\
    &\leq \left\| \bar{\bx}(k) - \bx^* - \frac{\alpha}{n} \sum_{i=1}^{n} \nabla g_i(\bar{\bx}(k))+\lambda \nabla f_i(\bar{\bx}(k))\right\|\\
    &\ \ \ \ +\frac{\alpha}{n} \sum_{i=1}^{n} \|h_i(\bx_i(k)))-\nabla g_i(\bar{\bx}(k))-\lambda \nabla f_i(\bar{\bx}(k))\| \\
    &\leq \left\| \bar{\bx}(k) - \bx^* - \frac{\alpha}{n} \sum_{i=1}^{n} \nabla b_i(\bar{\bx}(k))\right\| \nonumber \\
    &\ \ \ \ +\frac{\alpha}{n} \sum_{i=1}^{n} \|\nabla b_i(\bx_i(k)))-\nabla b_i(\bar{\bx}(k))\| + \|\boldsymbol{\xi}_i^k\| \\
    &\stackrel{\text{(Lemma~\ref{lem:gamma})}}{\!\!\!\!\!\!\!\!\!\!\!\!\!\!\!\!\!\!\!\!\!\leq} \!\!\!\!\!\!\!\!\!\!\!\!\!\!\!\!\theta \|\bar{\bx}(k)-\bx^*\|_2 + \frac{\alpha}{n} \sum\nolimits_{i=1}^{n} L_{i} \|\bx_i(k)-\bar{\bx}(k)\|_2 + \frac{\alpha C}{n}\\
    &\leq \theta \|\bar{\bx}(k)-\bx^*\|_2 + \frac{\alpha}{n} \sum\nolimits_{i=1}^{n} L_{i} \|\bx_i(k)-\bar{\bx}(k)\|_1 + \frac{\alpha C}{n}\\
    &\leq \theta \|\bar{\bx}(k)-\bx^*\|_2 + \frac{\alpha C}{n} + \frac{\alpha}{n} \tilde{L}  \|X(k)-\mathbf{1} \bar{\bx}^\intercal\|_1 \\
    &= \theta \|\bar{\bx}(k)-\bx^*\|_2 + \frac{\alpha C}{n} + \frac{\alpha \tilde{L}}{n}  \|X^\dagger(k)\|_1 \\
    &\leq \theta \|\bar{\bx}(k)-\bx^*\|_2 + \frac{\alpha C}{n} + \frac{\alpha \tilde{L} \sqrt{d}}{n}  \|X^\dagger(k)\|,
\end{align*}
where the last inequality follows from norm equivalence. Observe that given the bounds on \(\alpha\), we have $\sigma_2 \leq \theta = \sqrt{1 - \frac{\mu^2 \alpha}{n} \left(  \frac{2}{L} - \frac{\alpha}{n} \right)} \leq \gamma \leq 1$. Substituting $\|\bX^\dagger\|$ from Lemma~\ref{lem:X-dagger} into the above equation and considering linear gradient reduction~\eqref{eq:y_grad_red} yields,

\begin{align*}
    &\mathbb{E} \left[\|\bar{\bx}(k+1) - \bx^*\|\right] 
    \leq \mathbb{E} \left[\|\bar{\bx}(0) - \bx^*\|\right] \!\gamma^{k+1} + \frac{C\alpha}{n(1\!-\!\gamma)} \nonumber \\
    &\ \ \ \ \ \ \ \ \ \ \ \ \ \ \ \ \ \ \ \ + \frac{\bar{L}\sqrt{d}\alpha \mathbb{E} \left[\|X^\dagger(0)\|\right]}{n(\gamma - \sigma_2)} \gamma^{k+1} \\
    &\ \ \ \ \ \ \ \ \ \ \ \ \ \ \ \ \ \ \ \ + \frac{\bar{L}\alpha^2\sqrt{d}}{n} \sum_{s=0}^{k} \gamma^{k-s} \sum_{t=0}^{s-1} \sigma_2^{s-1-t} (D \gamma^t + B),
\end{align*}
which upon further simplification reduces to:

\begin{align*}
    &\mathbb{E} \left[\|\bar{\bx}(k\!+\!1) \!-\! \bx^*\|\right] \\
    & \ \ \leq \left(\mathbb{E}\left[\|\bar{\bx}(0) - \bx^*\|\right]+\frac{\bar{L}\sqrt{d}\alpha \mathbb{E}\left[\|X^\dagger(0)\|\right]}{n(\gamma - \sigma_2)}\right) \gamma^{k+1} \\
    &\ \ + \frac{C\alpha}{n(1-\gamma)}+\frac{\bar{L}\sqrt{d}\alpha^2}{n(1-\gamma)}\left(\frac{B}{\gamma-\sigma_2}+\frac{D}{\gamma}\right).
\end{align*}
\end{proof}
As observed in the above result, the first term on the right-hand side decays linearly to zero. In contrast, the second and third terms remain dependent on \(\alpha\). A similar pattern is evident in~\eqref{eq:consensus}. Consequently, for any desired accuracy level, one can choose a sufficiently small \(\epsilon\) such that \(\mathbb{E}[\|\bx_i(k) - \bx^*\|] \leq \epsilon\) for sufficiently large \(k\).  This concludes our analysis of the linear convergence of Algorithm~\ref{alg:BDASG}. Next, we establish the gradient reduction at a linear rate property~\eqref{eq:y_grad_red}, which plays a crucial role in proving the linear convergence of our proposed algorithm.

We now establish a series of lemmas, leading to the proof of the main theorem, which demonstrates that Algorithm~\ref{alg:BDASG} indeed achieves linear gradient reduction.
\begin{lemma}\label{lem:H}
    Let $\{\bx_i(k), \by_i(k)\}$ be the sequence generated by Algorithm~\ref{alg:BDASG}. Then there exist constants $\beta_1, \beta_2>0$ such that the vector-valued function $H(\cdot)$ defined in~\eqref{eq:alg} satisfies the following inequality:
    \[
        \|H(\bX(k+1))-H(\bX(k))\|_2 \leq \beta_1\|\bX(k+1)-\bX(k)\|_2 + \beta_2.
    \]
\end{lemma}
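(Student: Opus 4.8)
The plan is to split $H$ into a deterministic gradient part and a stochastic noise part, bound each separately, and recombine by the triangle inequality. Using the decomposition in~\eqref{eq:b-def}, I would write $H(\bX(k))=\nabla B(\bX(k))+\Xi(k)$, where the $i$-th block of $\nabla B(\bX(k))$ is $\nabla b_i(\bx_i(k))$ and the $i$-th block of $\Xi(k)$ is the noise term $\bxi_i=\bxi_{g_i}+\lambda\bxi_{f_i}$. Then
\[
    H(\bX(k+1))-H(\bX(k)) = \big(\nabla B(\bX(k+1))-\nabla B(\bX(k))\big) + \big(\Xi(k+1)-\Xi(k)\big),
\]
so the claim reduces to controlling these two pieces.

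For the gradient piece, I would invoke Remark~\ref{rem:L}, which guarantees that each $b_i=g_i+\lambda f_i$ is $L_i$-smooth, hence $\|\nabla b_i(\bx_i(k+1))-\nabla b_i(\bx_i(k))\|\leq L_i\|\bx_i(k+1)-\bx_i(k)\|$ for every $i$. Because the stacked Euclidean norm decomposes blockwise, $\|\nabla B(\bX(k+1))-\nabla B(\bX(k))\|_2^2=\sum_i\|\nabla b_i(\bx_i(k+1))-\nabla b_i(\bx_i(k))\|^2$, and pulling out $L_{\max}\coloneqq\max_i L_i$ gives $\|\nabla B(\bX(k+1))-\nabla B(\bX(k))\|_2\leq L_{\max}\|\bX(k+1)-\bX(k)\|_2$. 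This identifies $\beta_1=L_{\max}$.

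For the noise piece, the key point is that $\Xi(k+1)-\Xi(k)$ cannot be bounded by $\|\bX(k+1)-\bX(k)\|_2$, since the stochastic gradient errors at consecutive iterations are drawn independently; this is exactly why the lemma carries the additive constant $\beta_2$ rather than being a pure Lipschitz estimate. Instead I would use the almost-sure boundedness from Remark~\ref{rem:stochastic} and Remark~\ref{rem:g-plus-f}, namely $\|\bxi_i\|\leq C$ for all $i$, so that $\|\Xi(k)\|_2\leq\sqrt{n}\,C$ at every $k$. A triangle inequality then yields $\|\Xi(k+1)-\Xi(k)\|_2\leq 2\sqrt{n}\,C$ with probability $1$, so one may take $\beta_2=2\sqrt{n}\,C$. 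Combining the two bounds through the triangle inequality completes the proof. The argument is essentially routine; the only genuine subtlety is recognizing that the stochastic term must be absorbed into $\beta_2$ rather than the Lipschitz coefficient $\beta_1$, which is what forces the inequality to be affine rather than homogeneous in $\|\bX(k+1)-\bX(k)\|_2$.
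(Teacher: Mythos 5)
Your proof is correct and follows essentially the same route as the paper's: decompose each $h_i$ into the smooth deterministic gradient $\nabla b_i$ plus the bounded noise $\bxi_i$, apply Lipschitz continuity of $\nabla b_i$ to the first piece and the almost-sure bound $\|\bxi_i\|\leq C$ to the second, and recombine via the triangle inequality. The only difference is norm bookkeeping: the paper detours through the $1$-norm via Lemma~\ref{lem:norm-equiv} and arrives at $\beta_1=d\bar{L}$ and $\beta_2=2C\sqrt{d}$, whereas your direct blockwise computation in the (Frobenius) $2$-norm yields the tighter, dimension-free constants $\beta_1=\max_i L_i$ and $\beta_2=2\sqrt{n}\,C$.
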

\begin{proof}
Recall that from norm equivalence:

\begin{align*}
    &\|H(\!\bX(k\!+\!1))\!-\!H(\!\bX(k))\|_2 \leq\! \|H(\!\bX(k\!+\!1))\!-\!H(\!\bX(k))\|_1 \\
    &\ \ \ \ \leq \sqrt{d}\sum_{i=1}^n \|h_i(\bx_i(k+1))\!-\!h_i(\bx_i(k))\|_2 \\
    & \ \ \ \ \leq\sqrt{d}\sum_{i=1}^n \left(\| \nabla b_i(\bx_i(k+1)) - \nabla b_i(\bx_i(k)) \|_2 + 2C\right).
\end{align*}
Using Lipschitz-smoothness of $b_i(\cdot)$, the above inequality can be further simplified to:

\begin{align*}
    &\|H(\bX(k+1))\!-\!H(\bX(k))\|_2 \\
    &\leq \sqrt{d} \bar{L} \|X(k+1)-X(k) \|_1 +2C\sqrt{d} \\
    &\leq \underbrace{d \bar{L}}_{\beta_1}\|X(k+1)-X(k) \|_2 +\underbrace{2C\sqrt{d}}_{\beta_2},
\end{align*}
which completes the proof.
\end{proof}

\begin{lemma}\label{lem:app1}
    Let Assumptions~\ref{ass:grad}-\ref{ass:smoothness} hold. Consider the sequence $\{\bx_i(k), \by_i(k)\}$ generated by Algorithm~\ref{alg:BDASG} for all $i \in \cV$. Given $\gamma \in (\sigma_2,1)$, let the step size $\alpha$ be chosen as  
    \[
        \frac{n}{\bar{L}} + n \sqrt{\frac{(\sigma_2\!+\! \tau)^2 - 1}{\mu_\lambda} + \frac{1}{\bar{L}^2}} \leq \alpha \leq \frac{n}{\bar{L}} + n \sqrt{\frac{\gamma^2\!-\!1}{\mu_\lambda} + \frac{1}{\bar{L}^2}},
    \]
    where $\tau > 0$ is a tuning parameter. Then, the following bound holds:  
    \begin{align*}
        \mathbb{E}[\|\bar{\bx}(k+1) - \bx^*\|] &\leq \beta_3 \theta^{k+1} + \frac{\beta_1}{1 - \theta} \\
        &\ \ \ \ +\frac{\alpha \beta_2 \theta}{\sigma_2 (\theta - \sigma_2)} \sum_{l=0}^{k-1} \sigma_2^{k-l} \mathbb{E}[\|Y^\dagger(l)\|],
    \end{align*}
    where $\beta_3 > 0$ is a constant, and $\theta$ is as defined in Lemma~\ref{lem:gamma}.
\end{lemma}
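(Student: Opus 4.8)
The plan is to unroll the one-step contraction for the averaged iterate that was already established inside the proof of Theorem~\ref{thm:avg-red}, and then to eliminate the consensus error via Lemma~\ref{lem:X-dagger}, leaving a bound expressed purely through the history of $\|\bY^\dagger(l)\|$. Concretely, I would start from the per-iteration inequality
\[
    \|\bar{\bx}(k+1)-\bx^*\| \leq \theta\|\bar{\bx}(k)-\bx^*\| + \frac{\alpha C}{n} + \frac{\alpha\tilde{L}\sqrt{d}}{n}\|\bX^\dagger(k)\|,
\]
which follows from Lemma~\ref{lem:gamma} (the $\theta$-contraction of the aggregated gradient step on $g+\lambda f$), the $L_i$-smoothness of $b_i$ from Remark~\ref{rem:L}, and the bounded-noise constant $C$ of Remark~\ref{rem:g-plus-f}. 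Taking expectations and iterating this inequality from $0$ to $k$ gives
\[
    \bE[\|\bar{\bx}(k+1)-\bx^*\|] \leq \theta^{k+1}\bE[\|\bar{\bx}(0)-\bx^*\|] + \frac{\alpha C}{n}\sum_{s=0}^{k}\theta^{k-s} + \frac{\alpha\tilde{L}\sqrt{d}}{n}\sum_{s=0}^{k}\theta^{k-s}\bE[\|\bX^\dagger(s)\|].
\]

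The step-size window in the hypothesis is exactly the one inverting the relation $\theta(\alpha)=\gamma$ at its upper endpoint and $\theta(\alpha)=\sigma_2+\tau$ at its lower endpoint, so together with Lemma~\ref{lem:gamma} it guarantees $\sigma_2 < \theta < 1$. This makes the geometric sum $\sum_{s=0}^{k}\theta^{k-s}\le 1/(1-\theta)$ converge and yields the constant term, collected as $\beta_1/(1-\theta)$. For the last sum I would substitute the Lemma~\ref{lem:X-dagger} estimate $\bE[\|\bX^\dagger(s)\|] \le \sigma_2^{s}\bE[\|\bX^\dagger(0)\|] + \alpha\sum_{t=0}^{s-1}\sigma_2^{s-1-t}\bE[\|\bY^\dagger(t)\|]$. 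The initialization piece generates the mixed geometric sum $\sum_{s}\theta^{k-s}\sigma_2^{s}$, which is finite precisely because $\sigma_2<\theta$ and is absorbed, together with the $\bE[\|\bar{\bx}(0)-\bx^*\|]$ term, into $\beta_3\theta^{k+1}$. The doubly-summed piece is then rearranged by swapping the order of summation so that the $\|\bY^\dagger(l)\|$ terms can be factored out and weighted by a single geometric factor, producing the history term with coefficient $\alpha\beta_2\theta/(\sigma_2(\theta-\sigma_2))$.

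I expect the double-sum rearrangement to be the main obstacle. After the swap, each fixed index $l$ carries an inner sum over $s$ of the mixed form $\sum_{s}\theta^{k-s}\sigma_2^{s-1-l}$, i.e.\ a geometric series in the ratio $\sigma_2/\theta$; bounding it cleanly against a single geometric weight, while tracking the prefactors $\alpha,\tilde{L},\sqrt{d},n$ and not losing the decay, is the delicate step, and it is where the condition $\theta-\sigma_2>0$ (enforced by the step-size bounds) is indispensable to keep every denominator positive and every series summable. The constants $\beta_1,\beta_2,\beta_3>0$ are bookkeeping labels that absorb $\alpha, C, \tilde{L}, \sqrt{d}, n$ and the initializations $\bE[\|\bar{\bx}(0)-\bx^*\|]$ and $\bE[\|\bX^\dagger(0)\|]$; I would fix their exact values at the very end, once all geometric sums have been resolved.
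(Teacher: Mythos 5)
Your proposal follows essentially the same route as the paper's proof: the one-step contraction $\|\bar{\bx}(k+1)-\bx^*\| \leq \theta\|\bar{\bx}(k)-\bx^*\| + \tfrac{\alpha C}{n} + \tfrac{\alpha\bar{L}\sqrt{d}}{n}\|\bX^\dagger(k)\|$ obtained from Lemma~\ref{lem:gamma}, smoothness, and bounded noise; unrolling; substitution of the Lemma~\ref{lem:X-dagger} bound on the consensus error; and the summation-order swap that turns the double sum into a single geometric series in $\sigma_2/\theta$, with the initialization terms absorbed into $\beta_3\theta^{k+1}$ and the constants into $\beta_1/(1-\theta)$. This matches the paper's argument step for step, including the role of $\theta-\sigma_2>0$ in keeping the geometric sums summable.
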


\begin{proof} Applying triangle inequality to the averaged dynamics yields:
\begin{align*}
    \|\bar{\bx}(k+1) - \bxs\| \leq  \left\|\bar{\bx}(k)\!-\!\bxs\!-\!\frac{\alpha}{n}\!\sum_{i=1}^{n} \nabla b_i(\bar{\bx}(k))\right\|\\ + \frac{\alpha}{n}\!\sum_{i=1}^{n} \|h_i(\bx_i(k))\!-\!\nabla b_i(\bar{\bx}(k))\|,
\end{align*}
which can be further simplified using Lemma~\ref{lem:gamma} as:

\begin{align*}
    &\|\bar{\bx}(k+1) - \bxs\| \leq \theta \|\bar{\bx}(k)-\bx^*\| \\
    &\ \ \ \ \ \ \ \ \ + \frac{\alpha}{n} \sum_{i=1}^{n} \|\nabla b_i(\bx_i(k)))-\nabla b_i(\bar{\bx}(k))\| + \|\boldsymbol{\xi}_i^k\| \\
    &\ \ \ \ \ \ \ \ \ \leq \theta \|\bar{\bx}(k)-\bx^*\| + \frac{\alpha C}{n} + \frac{\alpha}{n} \bar{L} \underbrace{\|X(k)-\mathbf{1} \bar{\bx}^\intercal\|_1}_{\|\bX^\dagger(k)\|} \\
    &\ \ \ \ \ \ \ \ \ = \theta \|\bar{\bx}(k)-\bx^*\| + \frac{\alpha C}{n} + \frac{\alpha\bar{L}\sqrt{d}}{n}\|\bX^\dagger(k)\|.
\end{align*}
Iteratively expanding the above equation yields:

\begin{align*}
    \|&\bar{\bx}(k+1) - \bxs\| \leq \theta^{k+1}\|\bar{\bx}(0)-\bxs\| + \frac{\beta_1}{1\!-\!\theta}\\
    &\ \ \ \ \ + \beta_2\!\sum_{t=0}^k\theta^{k-t}\!\left(\!\sigma_2^t\|X^\dagger(0)\|+\alpha\sum_{l=0}^{t-1}\sigma_2^{t-1-l}\|Y^\dagger(l)\|\!\right)
\end{align*}
Rearranging the terms, the above inequality reads:
\begin{align*}
    &\|\bar{\bx}(k+1) - \bxs\| \leq \left(\|\bar{\bx}(0)-\bx^*\|+\frac{\beta_2 \|X^\dagger(0)\|}{\theta-\sigma_2}\right)\theta^{k+1} \\
    & \ \ \ \ \ \ \ \ \ + \frac{\beta_1}{1-\theta} + \alpha\beta_2 \sum_{l=0}^{k-1}\sigma_2^{k-1-l}\|Y^\dagger(l)\| \sum_{t=l+1}^{k}\left(\frac{\sigma_2}{\theta}\right)^{t-k} \\
    &\leq \beta_3 \theta^{k+1}+\frac{\beta_1}{1-\theta}+\frac{\alpha \beta_2 \theta}{\sigma_2 \left(\theta-\sigma_2\right)} \sum_{l=0}^{k-1}\sigma_2^{k-l}\|Y^\dagger(l)\|,
\end{align*}
which completes the proof.
\end{proof}

\begin{lemma}\label{lem:app2}
    Let Assumptions~\ref{ass:grad}-\ref{ass:smoothness} hold. Let $\{\bx_i(k), \by_i(k)\}$ be the sequence generated by Algorithm~\ref{alg:BDASG} for all $i\in\cV$. Given $\gamma\in(\sigma_2,1)$, and let the step size $\alpha$ be chosen as
    \[
        \frac{n}{\bar{L}}+n\sqrt{\frac{(\sigma_2+\tau)^2-1}{\mu}\!+\!\frac{1}{\bar{L}^2}}\leq\alpha\leq\frac{n}{\bar{L}}+n\sqrt{\frac{\gamma^2\!-\!1}{\mu}\!+\!\frac{1}{\bar{L}^2}},
    \]
    where $\tau>0$ is a tuning parameter. Then, the following bound holds:
    \begin{align*}
        \mathbb{E}[\|X(k\!+\!1)\!-\!X(k)\|_2] \leq \beta_4 \theta^k \!+\! \alpha \mathbb{E}[\|Y^\dagger(k)\|]\!+\!\frac{2n\beta_1\!\sqrt{d}}{1-\theta} \\ + \beta_5 \sum_{t=0}^{k-1}\!\theta^{k\!-\!1\!-\!t} \mathbb{E}[\|Y^\dagger(t)\|],
    \end{align*}
    where $\beta_4, \beta_5>0$ are appropriate constants and $\beta_1$ is as defined in Lemma~\ref{lem:H}.
\end{lemma}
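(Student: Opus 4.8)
The plan is to start from the state recursion $\bX(k+1)=A\bX(k)-\alpha\bY(k)$ in~\eqref{eq:alg}, which gives $\bX(k+1)-\bX(k)=(A-I)\bX(k)-\alpha\bY(k)$. Since $A$ is doubly stochastic, $(A-I)\mathds{1}=0$, so $(A-I)\bX(k)=(A-I)\bX^\dagger(k)$ and the difference depends on $\bX(k)$ only through its consensus error. I then split $\bY(k)=\bY^\dagger(k)+\tfrac{1}{n}\mathds{1}\mathds{1}^\intercal\bY(k)$, and by the averaged dynamics~\eqref{eq:avg-dynamics} the second piece equals $\mathds{1}\bar{\by}(k)^\intercal$ with $\bar{\by}(k)=\bar{H}(\bX(k))$. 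The triangle inequality then yields
\[
    \|\bX(k+1)-\bX(k)\|_2 \leq \|A-I\|_2\,\|\bX^\dagger(k)\|_2 + \alpha\|\bY^\dagger(k)\|_2 + \alpha\sqrt{n}\,\|\bar{\by}(k)\|_2 .
\]
The middle term is exactly the isolated $\alpha\,\mathbb{E}[\|\bY^\dagger(k)\|]$ contribution in the target, so all remaining work is to bound the first and third terms by a geometric $\theta^k$ part, a constant part, and a $\theta^{k-1-t}$-weighted tail of $\{\|\bY^\dagger(t)\|\}_{t<k}$.

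For the third term I would estimate $\bar{\by}(k)=\tfrac{1}{n}\sum_i h_i(\bx_i(k))$ using the decomposition $h_i=\nabla b_i+\bxi_i$. Adding and subtracting $\nabla b_i(\bar{\bx}(k))$, invoking $\sum_i\nabla b_i(\bx^*)=\nabla b(\bx^*)=0$ together with the $\bar{L}$-smoothness of $b=g+\lambda f$ (Remark~\ref{rem:L}), the per-agent Lipschitz bounds, and $\|\bxi_i\|\leq C$, I obtain $\|\bar{\by}(k)\|\leq \tfrac{\bar L}{n}\|\bar{\bx}(k)-\bx^*\| + \tfrac{\tilde L\sqrt{d}}{n}\|\bX^\dagger(k)\| + C$. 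This is precisely the same bookkeeping that drove Lemma~\ref{lem:gamma} and Theorem~\ref{thm:avg-red}, and it reduces both surviving terms to controlling $\|\bX^\dagger(k)\|$ and $\|\bar{\bx}(k)-\bx^*\|$.

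Next I substitute the two reductions already in hand. Lemma~\ref{lem:X-dagger} bounds $\mathbb{E}[\|\bX^\dagger(k)\|]$ by $\sigma_2^k\mathbb{E}[\|\bX^\dagger(0)\|]+\alpha\sum_{t=0}^{k-1}\sigma_2^{k-1-t}\mathbb{E}[\|\bY^\dagger(t)\|]$, and Lemma~\ref{lem:app1} gives the analogous split of $\mathbb{E}[\|\bar{\bx}(k)-\bx^*\|]$ into a $\theta^{k}$ term, the constant $\beta_1/(1-\theta)$, and a $\sigma_2$-weighted tail of $\{\|\bY^\dagger(l)\|\}$. Taking expectations in the displayed inequality and inserting these produces: geometric terms in $\sigma_2^k$ and $\theta^k$; the noise constant $\alpha\sqrt{n}\,C$ and the $\beta_1/(1-\theta)$ constant; and several tail sums with weights $\sigma_2^{k-1-t}$ and $\sigma_2^{k-l}$. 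Because the step-size window forces $\sigma_2\leq\theta\leq\gamma$ (as established in the proof of Theorem~\ref{thm:avg-red}), every factor $\sigma_2^{\,\cdot}$ is dominated by $\theta^{\,\cdot}$, and the finitely many bounded prefactors ($\|A-I\|_2$, $\bar L$, $\tilde L\sqrt{d}$, $\sqrt{n}$, and the $\theta/\sigma_2$ index-shift factors) are absorbed into a single $\beta_5$; the collected geometric terms become $\beta_4\theta^k$ and the collected constants are majorized by $2n\beta_1\sqrt{d}/(1-\theta)$.

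The main obstacle is the consolidation in this last step rather than any single estimate: the bound on $\|\bX^\dagger(k)\|$ enters both directly (first term) and through $\|\bar{\by}(k)\|$, while the bound on $\|\bar{\bx}(k)-\bx^*\|$ injects a second tail sum with a different weight and index range. Merging these nested sums into the clean form $\beta_5\sum_{t=0}^{k-1}\theta^{k-1-t}\mathbb{E}[\|\bY^\dagger(t)\|]$ requires reindexing (e.g.\ rewriting $\sigma_2^{k-l}$ as $\theta^{k-1-l}$ at the cost of a bounded factor) and verifying that no current-index term $\bY^\dagger(k)$ leaks in beyond the separately isolated $\alpha\|\bY^\dagger(k)\|$. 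Once the index ranges line up, the smoothness estimates and the $\sigma_2\leq\theta$ domination make the rest routine.
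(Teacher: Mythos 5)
Your proposal is correct, but it takes a genuinely different first step from the paper. The paper never touches the recursion $\bX(k+1)=A\bX(k)-\alpha\bY(k)$ in this proof; instead it writes $\bX(k+1)-\bX(k)$ as a four-way telescoping sum and applies the triangle inequality to get $\|\bX^\dagger(k+1)\|+\|\bX^\dagger(k)\|+\|\mathds{1}\bar{\bx}^\intercal(k+1)-\mathds{1}{\bx^*}^\intercal\|+\|\mathds{1}\bar{\bx}^\intercal(k)-\mathds{1}{\bx^*}^\intercal\|$, then feeds the first two terms into Lemma~\ref{lem:X-dagger} and the last two into Lemma~\ref{lem:app1}. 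You instead use the algebraic identity $\bX(k+1)-\bX(k)=(A-I)\bX^\dagger(k)-\alpha\bY^\dagger(k)-\alpha\mathds{1}\bar{\by}(k)^\intercal$ and control $\bar{\by}(k)$ via smoothness of $b$ and $\nabla b(\bx^*)=0$. Both routes funnel through the same two lemmas and the same $\sigma_2\le\theta$ domination, so they land on the same structural bound; the notable differences are (i) your decomposition only needs quantities at time $k$, whereas the paper's needs $\|\bX^\dagger(k+1)\|$ and $\|\bar{\bx}(k+1)-\bx^*\|$ (which is where its isolated $\alpha\|\bY^\dagger(k)\|$ term comes from, as the $t=k$ summand of Lemma~\ref{lem:X-dagger} applied at $k+1$ --- in your version that term appears directly and arguably more transparently), and (ii) your constants in front of the geometric, constant, and tail terms will differ from the paper's (e.g.\ your constant part collects $\alpha\sqrt{n}C$ and $\alpha\sqrt{n}\bar{L}\beta_1/(n(1-\theta))$ rather than exactly $2n\beta_1\sqrt{d}/(1-\theta)$), which is harmless given that the lemma only asserts ``appropriate constants'' but means your bound is not literally the displayed one without renaming. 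One small point to make explicit if you write this up: Lemma~\ref{lem:app1} bounds $\mathbb{E}[\|\bar{\bx}(k+1)-\bx^*\|]$, so to control $\|\bar{\bx}(k)-\bx^*\|$ you must apply it at index $k-1$, which shifts its tail sum to run over $l\le k-2$ with weight $\sigma_2^{k-1-l}$ --- this does fit inside $\beta_5\sum_{t=0}^{k-1}\theta^{k-1-t}\mathbb{E}[\|\bY^\dagger(t)\|]$, exactly as you anticipate in your reindexing discussion.
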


\begin{proof}
The proof follows a similar approach, utilizing the triangle inequality, as in:
\begin{align*}
    &\|X(k\!+\!1)\!-\!X(k)\| \leq  \|X^\dagger(k\!+\!1)\|+\|X^\dagger(k)\| \\ & \ \ \ \ \ \ \ 
 \ \ \ \ \ \ \ \ \ \ \ \ \  +\|\mathbf{1}\bar{\bx}^\intercal\!(k\!+\!1)\!-\!{\mathbf{1} \bx^*}^\intercal\!\| + \|\mathbf{1}\bar{\bx}^\intercal\!(k)\!-\!{\mathbf{1} \bx^*}^\intercal\!\|\\
    &\leq \sigma_2^{k+1} \|X^\dagger(0)\| + \alpha \sum\nolimits_{t=0}^{k} \sigma_2^{k-t}\|Y^\dagger(t)\|+\sigma_2^{k} \|X^\dagger(0)\|\\
    &\ \ \ \ + \alpha \sum_{t=0}^{k-1}\sigma_2^{k-1-t}\|Y^\dagger(t)\|+\|\mathbf{1}\bar{\bx}^\intercal(k+1)-{\mathbf{1} \bx^*}^\intercal\|_1 \\ &\ \ \ \ +\|\mathbf{1}\bar{\bx}^\intercal\!(\!k\!)\!-\!{\mathbf{1} \bx^*}^\intercal\|_1,
\end{align*}
where the last inequality follows from Lemma~\ref{lem:X-dagger}. Rearranging the terms and applying Lemma~\ref{lem:app1} yields:
\begin{align*}
    &\|X(k+1)\!-X(k)\| \leq 2\sigma_2^{k} \|X^\dagger(0)\|+\alpha\|Y^\dagger(k)\| \\
    & \ \ + 2\alpha \sum\nolimits_{t=0}^{k-1} \sigma_2^{k-1-t}\|Y^\dagger(t)\| +2n\sqrt{d}\beta_3 \theta^{k}+\frac{2n\sqrt{d}\beta_1}{1-\theta}\\
    & \ \ +\frac{n \sqrt{d}\alpha \beta_2 \theta}{\sigma_2\left(\theta-\sigma_2\right)}\sum_{l=0}^{k-1}\sigma_2^{k-l}\|Y^\dagger(l)\|\\
    &\leq 2\sigma_2^{k} \|X^\dagger(0)\|+\alpha\|Y^\dagger(k)\|+2\alpha \sum\nolimits_{t=0}^{k-1} \sigma_2^{k-1-t}\|Y^\dagger(t)\|\\
    &\ \ \ +2n\sqrt{d}\beta_3 \theta^{k}+\frac{2n\sqrt{d}\beta_1}{1-\theta}+\frac{n \sqrt{d}\alpha \beta_2 \theta}{\left(\theta-\sigma_2\right)}\|Y^\dagger(k-1)\|\\
    &\ \ \ +\frac{2n \sqrt{d}\alpha \beta_2 \theta}{\sigma_2 \left(\theta-\sigma_2\right)} \sum\nolimits_{l=0}^{k-2}\sigma_2^{k-1-l}\|Y^\dagger(l)\|,
\end{align*}

\noindent which can be further simplified as:
\begin{align*}
    &\|X(k\!+\!1)\!-\!X(k)\| \leq \theta^k\left(2\|X^\dagger(0)\|+2n\sqrt{d}\beta_3\right)\\
    & \ \ \ \ \ +\alpha\|Y^\dagger(k)\| +2\alpha \sum_{t=0}^{k-1} \theta^{k-1-t}\|Y^\dagger(t)\| +\frac{2n\sqrt{d}\beta_1}{1-\theta}\\
    & \ \ \ \ \ +\frac{2n \sqrt{d}\alpha \beta_2 \theta}{\sigma_2 \left(\theta-\sigma_2\right)} \sum_{l=0}^{k-1}\theta^{k-1-l}\|Y^\dagger(l)\| \\
    &\leq \beta_4 \theta^k + \alpha \|Y^\dagger(k)\| + \frac{2n\beta_1\sqrt{d}}{1-\theta} + \beta_5 \sum_{t=0}^{k-1} \theta^{k-t} \|Y^\dagger(t)\|.
\end{align*}
Taking expectation on both sides completes the proof.
\end{proof}

\begin{lemma}\label{lem:app3}
    Let Assumptions~\ref{ass:grad}-\ref{ass:smoothness} hold. Let $\{\bx_i(k), \by_i(k)\}$ be the sequence generated by Algorithm~\ref{alg:BDASG} for all $i\in\cV$, and let the conditions in Lemma~\ref{lem:app2} hold. Then, the vector-valued function $H(\cdot)$ defined in~\eqref{eq:alg} satisfies the following inequality:
    \begin{align*}
        \mathbb{E}[\| H(X(k\!+\!1))\!-\!H(X(k)) \|] \leq \beta_7 \theta^k + \bar{L} d\alpha \mathbb{E}[\|Y^\dagger(k)\|] \\
        + \beta_6 + \eta \sum_{t=0}^{k-1} \theta^{k-1-t} \mathbb{E}[\|Y^\dagger(t)\|],  
    \end{align*}
    where $\beta_6, \beta_7$, and $\eta>0$ are appropriate constants.
\end{lemma}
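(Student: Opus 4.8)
The plan is to chain together the two estimates already in hand: Lemma~\ref{lem:H}, which controls the increment of $H$ linearly by the increment of $X$, and Lemma~\ref{lem:app2}, which bounds the expected increment of $X$ in terms of a geometric term, the current gradient residual $\|Y^\dagger(k)\|$, a constant, and a convolution sum of past residuals. First I would take expectations on both sides of the inequality in Lemma~\ref{lem:H} and use linearity of expectation to obtain
\[
\mathbb{E}[\|H(X(k+1))-H(X(k))\|] \leq \beta_1\,\mathbb{E}[\|X(k+1)-X(k)\|] + \beta_2.
\]
This reduces the entire claim to substituting a known bound for $\mathbb{E}[\|X(k+1)-X(k)\|]$.

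Next I would insert the bound from Lemma~\ref{lem:app2} directly into the right-hand side, yielding
\[
\mathbb{E}[\|H(X(k+1))-H(X(k))\|] \leq \beta_1\!\left(\beta_4\theta^k + \alpha\,\mathbb{E}[\|Y^\dagger(k)\|] + \frac{2n\beta_1\sqrt{d}}{1-\theta} + \beta_5\sum_{t=0}^{k-1}\theta^{k-1-t}\,\mathbb{E}[\|Y^\dagger(t)\|]\right) + \beta_2.
\]
The remainder is purely a matter of relabeling constants to match the target form. Setting $\beta_7 \coloneqq \beta_1\beta_4$, $\beta_6 \coloneqq \tfrac{2n\beta_1^2\sqrt{d}}{1-\theta} + \beta_2$, and $\eta \coloneqq \beta_1\beta_5$ collapses the expression exactly into the stated inequality, with the geometric, current-residual, constant, and summation terms lining up term by term.

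The single point requiring care—and the closest thing to an obstacle—is reconciling the coefficient of $\mathbb{E}[\|Y^\dagger(k)\|]$. The lemma writes this coefficient as $\bar{L}d\alpha$, whereas the substitution produces $\beta_1\alpha$. Here I would invoke the explicit value $\beta_1 = d\bar{L}$ fixed in the proof of Lemma~\ref{lem:H}, so that $\beta_1\alpha = d\bar{L}\alpha = \bar{L}d\alpha$, confirming the two agree. Positivity of the new constants follows since $\beta_1,\beta_2,\beta_4,\beta_5>0$ and the step-size conditions inherited from Lemma~\ref{lem:app2} force $\theta\in(0,1)$, hence $1-\theta>0$. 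Beyond this bookkeeping, the argument is a direct composition of the two preceding lemmas and introduces no new estimation.
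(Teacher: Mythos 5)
Your proposal is correct and follows essentially the same route as the paper: apply Lemma~\ref{lem:H}, substitute the bound from Lemma~\ref{lem:app2}, and absorb the products into the new constants; indeed your choices $\beta_7=\beta_1\beta_4$, $\beta_6=\tfrac{2n\beta_1^2\sqrt{d}}{1-\theta}+\beta_2$, and $\eta=\beta_1\beta_5$ coincide exactly with the paper's (using $\beta_1=d\bar{L}$, $\beta_2=2C\sqrt{d}$). Your explicit reconciliation of the coefficient $\bar{L}d\alpha$ via $\beta_1=d\bar{L}$ is the same identification the paper makes implicitly.
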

\begin{proof}
    The proof follows directly from Lemmas~\ref{lem:H}-\ref{lem:app2}:
    \begin{align*}
        \|&H(\!X(k\!+\!1))\!-\!H(\!X(k)) \| \leq d\bar{L} \|X(k\!+\!1)\!-\!X(k) \|\!+\!2C\sqrt{d} \\
        & \ \ \ \ \ \leq\underbrace{\bar{L} d\beta_4}_{\coloneqq\beta_7} \theta^k + \bar{L} d\alpha \|Y^\dagger(k)\| + \underbrace{\frac{2n\bar{L} d\beta_1\sqrt{d}}{1-\theta}+ 2\sqrt{d} C}_{\coloneqq\beta_6} \\
        & \ \ \ \ \ \ \ \ + \underbrace{\bar{L}d\beta_5}_{\coloneqq\eta} \sum_{t=0}^{k-1}\theta^{k-1-t} \|Y^\dagger(t)\|,
    \end{align*}
    where the last inequality follows from Lemma~\ref{lem:app2}.
\end{proof}

We now proceed to discuss the linear rate of the gradient
 reduction, i.e., \eqref{eq:y_grad_red} can be achieved by Algorithm~\ref{alg:BDASG}. This will complete our analysis for the linear convergence.
\begin{theorem}\label{thm:grad-red}
    Let Assumptions~\ref{ass:grad}-\ref{ass:smoothness} hold. Let $\{\bx_i(k), \by_i(k)\}$ be the sequence generated by Algorithm~\ref{alg:BDASG} for all $i\in\cV$. In addition, given some constant $\gamma\in\left[\sqrt{1-\frac{\mu^2}{\bar{L}^2}},1\right)$ and a tuning parameter $\tau>0$, let the step size satisfy the condition in Theorem~\ref{thm:avg-red}. Then there exist constants $B, D>0$ such that the sequence $\{\by_i(k)\}$ satisfies $\bE[\|\bY^\dagger(k)\|]\leq D\gamma^k + B$.
\end{theorem}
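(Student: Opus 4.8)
The plan is to reduce the vector dynamics to a single scalar recursion for $u_k \coloneqq \mathbb{E}[\|\bY^\dagger(k)\|]$ and to close that recursion by a strong induction with the ansatz $u_k \leq D\gamma^k + B$. First I would project the $\bY$-update in~\eqref{eq:alg} onto the disagreement subspace. Since the mixing matrix $A$ is doubly stochastic (so that $A\mathds{1}=\mathds{1}$ and $\mathds{1}^\intercal A = \mathds{1}^\intercal$, which is precisely what validates the averaged dynamics~\eqref{eq:avg-dynamics}), the projector $I-\frac{1}{n}\mathds{1}\mathds{1}^\intercal$ commutes with $A$, giving
\begin{equation*}
    \bY^\dagger(k+1) = A\bY^\dagger(k) + \left(I-\tfrac{1}{n}\mathds{1}\mathds{1}^\intercal\right)\bigl(H(\bX(k+1))-H(\bX(k))\bigr).
\end{equation*}
Taking norms and using $\|A\bv\|\leq\sigma_2\|\bv\|$ for every $\bv\perp\mathds{1}$ together with the nonexpansiveness of the projector yields the one-step bound $\|\bY^\dagger(k+1)\|\leq\sigma_2\|\bY^\dagger(k)\|+\|H(\bX(k+1))-H(\bX(k))\|$.

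Next I would take expectations and substitute the bound on $\mathbb{E}[\|H(\bX(k+1))-H(\bX(k))\|]$ supplied by Lemma~\ref{lem:app3}. This produces an inequality expressed entirely in the sequence $\{u_t\}$:
\begin{equation*}
    u_{k+1} \leq (\sigma_2+\bar{L}d\alpha)\,u_k + \beta_7\theta^k + \beta_6 + \eta\sum_{t=0}^{k-1}\theta^{k-1-t}u_t .
\end{equation*}
The structural facts I would exploit are the ordering $\sigma_2 < \sigma_2+\tau \leq \theta \leq \gamma < 1$, which follows from Lemma~\ref{lem:gamma} together with the step-size window of Theorem~\ref{thm:avg-red}, and which guarantees simultaneously that the intrinsic geometric factor $\theta$ lies strictly below the target rate $\gamma$ and that every geometric sum appearing below converges.

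I would then establish $u_k\leq D\gamma^k+B$ by induction on $k$. Assuming the bound for all indices up to $k$ and substituting into the recursion, I would evaluate the convolution using $\theta\leq\gamma$ via $\sum_{t=0}^{k-1}\theta^{k-1-t}\gamma^t \leq \gamma^k/(\gamma-\theta)$ and $\sum_{t=0}^{k-1}\theta^{k-1-t}\leq 1/(1-\theta)$, and bound $\theta^k\leq\gamma^k$. Collecting the coefficients of $\gamma^k$ and the coefficients of the constant term separately, the inductive step closes provided $D$ is chosen so that the aggregate $\gamma^k$-coefficient is at most $D\gamma$, and $B$ so that the aggregate constant is at most $B$; each condition is a single linear inequality that can be solved explicitly for $D$ and $B$ in terms of $\beta_6,\beta_7,\eta,\sigma_2,\theta$, and $\gamma$, after which the base case $k=0$ fixes $D$ to absorb $u_0$.

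I expect the main obstacle to be the self-referential character of the recursion: the growth of $\|\bY^\dagger\|$ is controlled through $\|H(\bX(k+1))-H(\bX(k))\|$, which by Lemmas~\ref{lem:app1}--\ref{lem:app3} feeds back on the entire past of $\|\bY^\dagger(t)\|$ and on the current $\|\bY^\dagger(k)\|$. Consequently the effective one-step factor is not merely $\sigma_2$, and one must verify that within the prescribed step-size window the combination of this factor with the convolution kernel still decays no faster than $\gamma$; concretely, the inductive step for the $\gamma^k$-coefficient requires $\gamma-\sigma_2-\bar{L}d\alpha-\eta/(\gamma-\theta)>0$, so that $D$ can be taken positive. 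Certifying that this admissibility inequality is compatible with the step-size constraints and with the tuning parameter $\tau$ is the quantitative crux of the proof; once it holds, the constants $D$ and $B$ follow immediately by matching coefficients as above.
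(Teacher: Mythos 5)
Your proposal follows the paper's skeleton exactly through its first half: project the $\bY$-update onto the disagreement subspace, contract by $\sigma_2$, and invoke Lemma~\ref{lem:app3} to obtain the scalar recursion $u_{k+1}\leq(\sigma_2+\bar{L}d\alpha)u_k+\beta_7\theta^k+\beta_6+\eta\sum_{t}\theta^{k-1-t}u_t$ with $u_k=\mathbb{E}[\|\bY^\dagger(k)\|]$. Where you diverge is in closing this recursion: you propose a strong induction with the ansatz $u_k\leq D\gamma^k+B$ and coefficient matching, whereas the paper unrolls the recursion, exchanges the double sum, and introduces the auxiliary nondecreasing sequence $r(k)=\sum_{t=0}^{k}\theta^{-t}u_t$, for which it derives a separate recursion and the bound \eqref{eq:rf} before substituting back. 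Your route is more direct and makes the required constants explicit as solutions of two linear inequalities; the paper's route avoids having to guess the ansatz but buries the same smallness requirement inside condition \eqref{eq:theta_times}, namely $\theta\bigl(1+\eta/(\theta-\sigma_2-\bar{L}d\alpha)\bigr)\leq1$, which it asserts without proof, just as it asserts $\sigma_2+\bar{L}d\alpha<\theta$. So your unverified admissibility condition is the honest analogue of the paper's unverified \eqref{eq:theta_times}, and you deserve credit for flagging it as the crux rather than waving it through.

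One concrete caveat: your condition $\gamma-\sigma_2-\bar{L}d\alpha-\eta/(\gamma-\theta)>0$ is strictly more fragile than the paper's, because the step-size window of Theorem~\ref{thm:avg-red} only guarantees $\theta\leq\gamma$, with equality attained at the upper endpoint of the window; there the factor $1/(\gamma-\theta)$ blows up and your induction cannot close for any positive $D$. The paper's condition involves $\eta/(\theta-\sigma_2-\bar{L}d\alpha)$ measured against $(1-\theta)/\theta$ and does not degenerate as $\theta\to\gamma$. To repair this you would either need to restrict $\alpha$ strictly inside the window (so that $\theta<\gamma$ with a quantified gap, e.g., by targeting $\theta\leq\gamma'<\gamma$ for an intermediate rate $\gamma'$ and bounding the convolution by $\gamma^k/(\gamma-\gamma')$), or switch to the paper's $r(k)$-based bookkeeping, which handles the $\theta^k$ convolution without dividing by $\gamma-\theta$. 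With that adjustment your argument is a valid, and arguably cleaner, alternative proof of the theorem.
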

\begin{proof}
    Observe that the step size $\alpha$ satisfies the bound in Theorem~\ref{thm:avg-red}, implying $\theta \in (\sigma_2, 1)$. In addition, it can be shown that $\sigma_2 + \bar{L} d \alpha < \theta < 1$ with some proper choice of $\tau$. From the definition of $\bY^\dagger$, it follows that:
    \begin{align}\label{eq:thm2-1}
        &\bE\left[ \|Y^{\dagger} (k+1) \| \right] \leq \sigma_2 \mathbb{E} \left[ \| Y^{\dagger} (k) \| \right] \nonumber\\
        &\ \ \ \ \ \ \ \ \ \ \ \ \ \ \ \ \ \ \ \ \ \ \ \ \ \ + \mathbb{E} \left[ \| H(X(k+1)) - H(X(k)) \| \right]\nonumber\\
        &\stackrel{\text{(Lemma~\ref{lem:app3})}}{\!\!\!\!\!\!\!\!\!\!\!\!\!\!\!\!\!\!\!\!\!\!\!\!\!\!\!\!\leq}\!\!\!\!\!\!\!\!\!\!\!\!\!\!\!\!\!\!\!(\sigma_2\!+\!\bar{L}d\alpha)\mathbb{E}\!\left[\| Y^{\dagger}\!(k)\!\|\right]\!\!+\!\beta_7\theta^k\!+\!\beta_6\!+\!\eta\!\!\sum_{t=0}^{k-1}\!\theta^{k\!-\!t} \mathbb{E}\!\left[\|\!Y^{\dagger}\!(t)\!\|\right]\nonumber \\
        & \leq (\sigma_2 + \bar{L}d\alpha)^{k+1} \mathbb{E}\left[\|Y^{\dagger}(0)\|\right] + \frac{\beta_8}{1\!-\!\sigma_2\!-\!\bar{L}\alpha} \nonumber\\
        &\ \ \ + \eta\sum_{t=0}^{k}(\sigma_2+\bar{L}\alpha)^{k-t}\sum_{\ell=0}^{t-1}\theta^{t-\ell} \mathbb{E}\left[\|Y^{\dagger}(\ell)\|\right]\nonumber\\
        & \leq (\sigma_2 + \bar{L}d\alpha)^{k\!+\!1} \mathbb{E}\!\left[ \| Y^{\dagger} (0) \| \right] + \frac{\beta_8}{1\!-\!\sigma_2\!-\!\bar{L}d\alpha}\nonumber\\
        &\ \ \ +\frac{\eta\theta}{\theta\!-\!\sigma_2\!-\!\bar{L}d\alpha} \sum_{t=0}^{k-1}\!\mathbb{E}\!\left[ \| Y^{\dagger} (t) \| \right]\!\theta^{k-t},
    \end{align}
    where the last inequality follows from:

    \begin{align*}
        &\sum_{t=0}^{k}(\sigma_2 + \bar{L}d\alpha)^{k-t} \sum_{\ell=0}^{t-1} \theta^{t-\ell} \mathbb{E} \left[ \| Y^{\dagger} (\ell) \| \right] \\
        &= (\sigma_2 + \bar{L}d\alpha)^k \sum_{\ell=0}^{k-1} \mathbb{E} \left[ \| Y^{\dagger} (\ell) \| \right] \theta^{-\ell}\sum_{t=\ell+1}^{k} \left( \frac{\theta}{\sigma_2 + \bar{L}d\alpha} \right)^t \notag \\
        &\leq \frac{\theta}{\theta - \sigma_2 - \bar{L}d\alpha} \sum_{\ell=0}^{k-1} \mathbb{E} \left[ \| Y^{\dagger} (\ell) \| \right] \theta^{k-\ell} \notag
    \end{align*}

Let $r(k-1) = \sum_{t=0}^{k-1} \theta^{-t} \mathbb{E} \left[ \| Y^{\dagger} (t) \| \right]$. Then $r(k)$ is a non-decreasing, non-negative function with $r(-1) = 0$. In addition, from \eqref{eq:thm2-1} and using a proper choice of constants $\beta_9, \beta_{10}$ we have
\begin{align}\label{eq:yk1}
    \mathbb{E} \left[ \| Y^{\dagger} (k+1) \| \right] 
    \leq &\beta_9^{k+1} \mathbb{E} \left[ \| Y^{\dagger} (0) \| \right] + \beta_{10} \nonumber \\
    &+ \frac{\eta}{\theta\!-\!\sigma_2\!-\!\bar{L}d\alpha} \theta^{k+1} r(k-1).
\end{align}
We now provide an upper bound for $r(k)$. Consider:
\begin{align}
    &r(k) - r(k\!-\!1) = \theta^{-k} \mathbb{E} \left[ \| Y^{\dagger} (k) \| \right] \notag \\
    &\leq \theta^{-k}\left(\!\beta_9^k \mathbb{E} \left[ \| Y^{\dagger} (0) \| \right] + \beta_{10} + \frac{\eta}{\theta\!-\!\sigma_2\!-\!\bar{L}d\alpha} \theta^k r(k\!-\!2) \!\right) \notag \\
    &\leq \mathbb{E} \left[ \| Y^{\dagger} (0) \| \right] + \beta_{10} \theta^{-k} + \frac{\eta}{\theta - \sigma_2 - \bar{L}d\alpha} r(k-2), \notag
\end{align}
which since $r(-1) = 0$, can be simplified to:
\begin{align}\label{eq:r}
    r(k) &\leq\mathbb{E} \left[\!\| Y^{\dagger}(0) \|\!\right]\!+\!\beta_{10}\theta^{-k}\!+\! \left(\!\frac{\eta}{\theta\!-\!\sigma_2\!-\!\bar{L}d\alpha}\!+\!1\!\right)\!r(k\!-\!1) \notag \\
    &\leq \mathbb{E} \left[ \| Y^{\dagger} (0) \| \right] \!\sum\nolimits_{t=0}^{k-1}\!\left( 1 + \frac{\eta}{\theta\!-\!\sigma_2\!-\!\bar{L}d\alpha} \right)^t \notag\\
    & \ \ \ \ +\beta_{10} \sum\nolimits_{t=0}^{k-1} \theta^{-k+t}\left( 1 + \frac{\eta}{\theta\!-\!\sigma_2\!-\!\bar{L}d\alpha} \right)^t.
\end{align}

For the given bounds on $\alpha$, one can show that
\begin{equation}\label{eq:theta_times}
    \theta\left(1+\frac{\eta}{\theta-\sigma_2-\bar{L}d\alpha}\right) \leq 1.
\end{equation}

Thus, from \eqref{eq:r} and \eqref{eq:theta_times}, it follows that:
\begin{equation}\label{eq:rf}
    r(k) \leq \frac{\theta^{-k}}{1-\theta}\mathbb{E}[\|Y^\dagger(0)\|] + \beta_{11} \theta^{-k}.
\end{equation}

Substituting \eqref{eq:rf} into \eqref{eq:yk1} yields:
\begin{align*}
    \mathbb{E}[\|Y^\dagger(k+1)\|] &\leq \gamma^{k+1}\mathbb{E}[\|Y^\dagger(0)\|]+\beta_{10}\\
    &+\frac{\eta\theta^2}{\theta - \sigma_2 - \bar{L}d\alpha}\left(\frac{\mathbb{E}[\|Y^\dagger(0)\|]}{1-\theta}+\beta_{11}\right),
\end{align*}
which completes the linear convergence analysis.
\end{proof}

\section{NUMERICAL EXPERIMENTS}\label{sec:exp}
We evaluate the performance of the proposed BDASG algorithm on two problem settings: (i) the Distributed Sensor Network problem, and (ii) the Distributed Linear Regression problem with rank-deficient data. The first experiment is conducted over 150-node randomly connected network, while for the second experiment two distinct network topologies are considered: a ring graph and a star graph. For both cases, we employ the Metropolis weighting scheme to construct the adjacency matrix of the communication graph.  

\[
a_{ij} =
\begin{cases} 
\frac{1}{1 + \max\{d_i, d_j\}} & \text{if } i \neq j, \\[10pt]
1 - \sum_{j \neq i} a_{ij} & \text{if } i = j
\end{cases},
\]
where \(d_i\) is the number of neighbors for agent \(i\).

\noindent \textbf{Distributed Sensor Network Problems:}  
The problem is formulated as the following unconstrained optimization task:  
\[
\min_{\bx \in \mathbb{R}^d} \sum_{i=1}^{n} \| \bz_i - H_i \bx \|_2^2,
\]
where \(H_i \in \mathbb{R}^{m \times d}\) represents the measurement matrix, and \(\bz_i \in \mathbb{R}^m\) denotes the noisy observation at the \(i\)-th sensor. The secondary objective function is given by  
\[
\sum_{i=1}^{n} \|\bx\|^2.
\]  

In our implementation, we set the regularization parameter to \(\lambda = 0.01\) and the learning rate to \(\gamma = 0.01\). Stochastic gradients are employed in the simulation, incorporating noise sampled from a normal distribution with mean 0 and standard deviation 0.01. The matrices \(H_i\) and vectors \(\bz_i\) are randomly generated with parameters \(n = 150\), \(d = 30\), and \(m = 1\). The experiment is repeated 50 times using the same \(H_i\) and \(\bz_i\), and we report the average results (Figure~\ref{fig:fig1}b) to assess the performance of the algorithm in expectation.  we evaluate the algorithm’s performance using the error metric \( \| \bar{\mathbf{x}} - \bxs \| \), where \( \bar{\mathbf{x}} \) represents the average of the agents' local estimates, and \( \bxs \) is the centralized optimal solution.

\begin{figure*}[!ht]
	\begin{center}
		\begin{tabular}{cc}   \includegraphics[width=0.7\columnwidth]{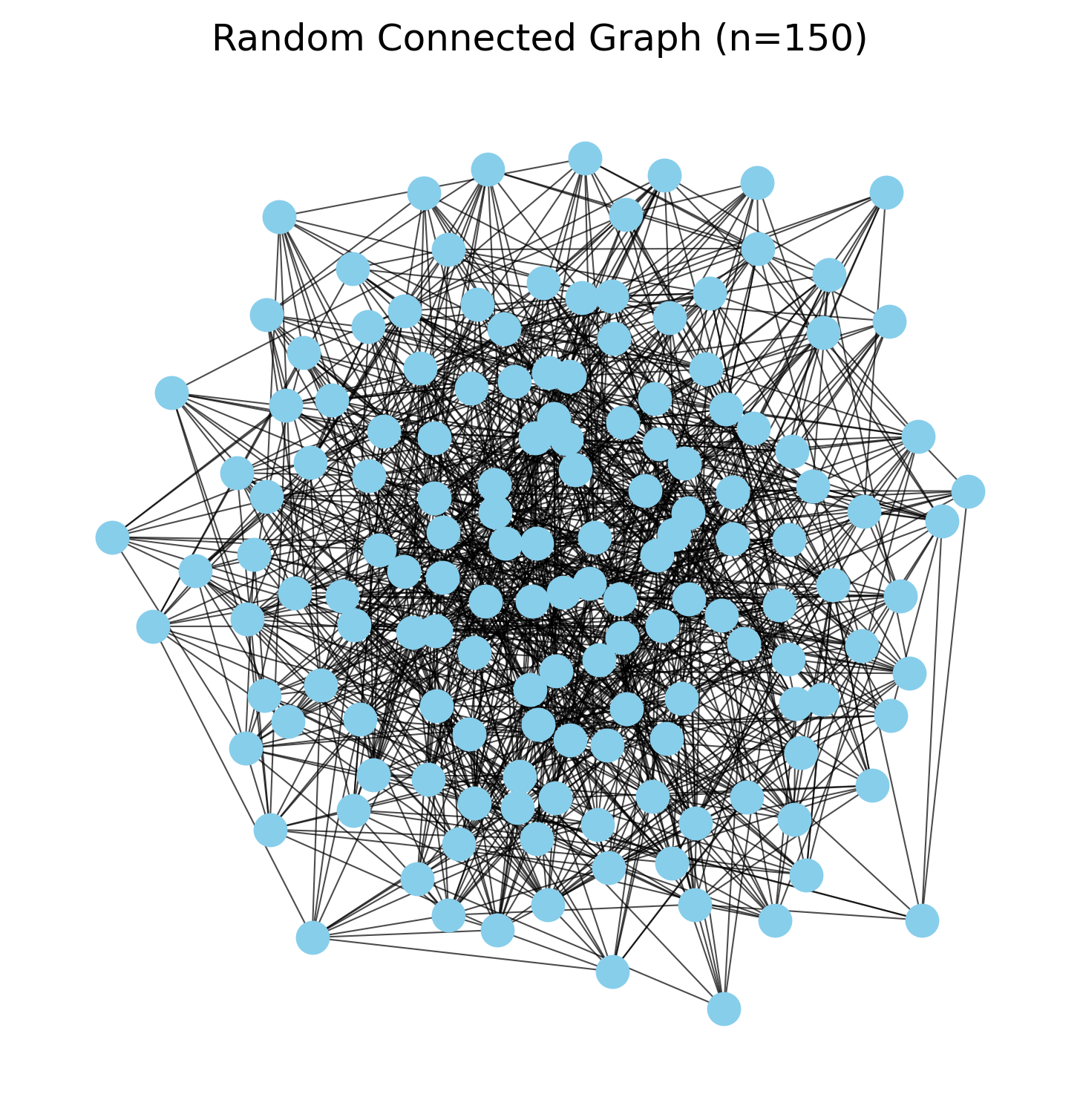} & \includegraphics[width=0.9\columnwidth]{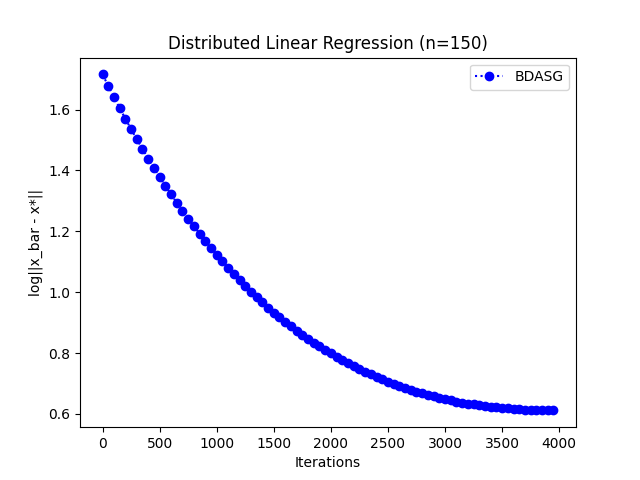} \cr
                	(a) & (b)\cr
                	\includegraphics[width=0.85\columnwidth]{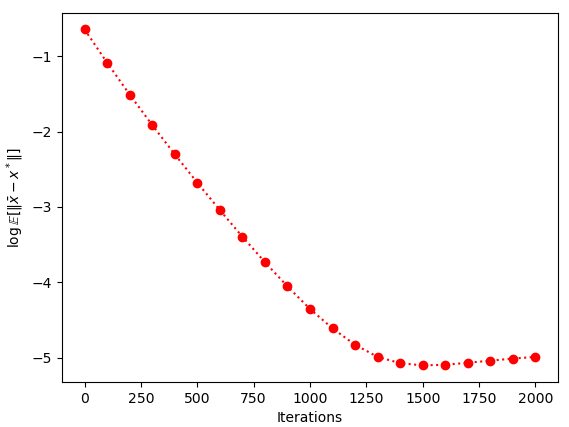} & \includegraphics[width=0.85\columnwidth]{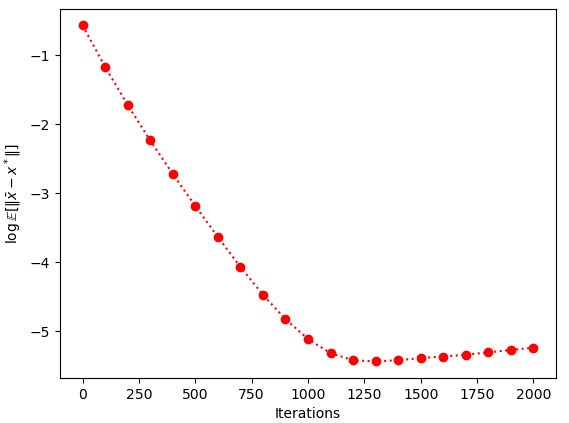}\cr
    			(c) & (d)
		\end{tabular}
	\end{center}
    \vspace{-1em}
        \caption{Plots for Distributed sensor network problem on 150-node network -- (a) network topology, (b) convergence behavior. Plots for distributed linear regression with rank deficiency problem on (c) star graph, and (d) ring graph, respectively.}
        \label{fig:fig1}
        \vspace{-.5em}
\end{figure*}


\noindent \textbf{Distributed Linear Regression with Rank Deficiency:} We assess the performance of the BDASG algorithm in solving a Lasso regression problem. The objective is to solve the following regularized ill-posed problem:  
\[
\min_{\bx \in \mathbb{R}^d} \sum_{i=1}^{n} \| A_i \bx - \mathbf{b}_i \|_2^2 + \lambda \|\bx\|_1,
\]
where \( A_i \in \mathbb{R}^{m \times d} \) represents the local measurement matrix of the \(i\)-th agent, and \( \mathbf{b}_i \in \mathbb{R}^{m} \) is the corresponding observation. To introduce ill-conditioning, one agent is assigned a rank-deficient matrix \(A_i\).  For our implementation, we set the regularization parameter to \( \lambda = 0.1 \) and the learning rate to \( \gamma = 0.001 \). The network topology is modeled using both a ring graph and a star graph, with the adjacency matrix constructed using the Metropolis weighting scheme. Once again, stochastic gradients are utilized, with noise sampled from a normal distribution with mean 0 and standard deviation 0.001.  

We generate \( A_i \) and \( b_i \) randomly for \( n = 9 \) agents, with \( d = 3 \) and \( m = 3 \). The experiment runs for 2000 iterations, and we evaluate the algorithm’s performance using the error metric \( \| \bar{\mathbf{x}} - \bxs \| \), where \( \bar{\mathbf{x}} \) represents the average of the agents' local estimates, and \( \bxs \) is the centralized optimal solution obtained via the proximal gradient method. The results are presented in Figures~\ref{fig:fig1}c and d.


\section{CONCLUSIONS}
In this work, we address the problem of bilevel distributed optimization by proposing the BDASG algorithm, which achieves linear convergence under minimal assumptions. Our theoretical analysis establishes that BDASG ensures convergence when the global objective function is strongly convex, without requiring convexity of individual functions. Furthermore, we conjecture that the algorithm remains effective under the weaker PL condition, suggesting broader applicability. Our findings have significant implications for large-scale distributed optimization, particularly in scenarios where individual objective functions lack strong convexity.


\addtolength{\textheight}{-12cm}   







\bibliographystyle{IEEEtran}
\bibliography{autosam}

\end{document}